\newtheorem{definition}{Definition}[section]
\newtheorem{theorem}{Theorem}[section]
\newtheorem{conjecture}{Conjecture}[section]
\newtheorem{corollary}{Corollary}[section]
\newtheorem{proposition}{Proposition}[section]
\newtheorem{remark}{Remark}[section]
\numberwithin{equation}{section}
\begin{document}
\begin{CJK*}{GBK}{song}
\title{\bf   Global injectivity of differentiable maps via W-condition in $\mathbb{R}^2$~~\thanks{Supported by NSFC.\quad\quad
		E-mails: liuw16@mails.tsinghua.edu.cn\quad\quad }}

\date{}
\author{
	{\bf Wei Liu}      \\
	\footnotesize  \it  Department of Mathematical Sciences, Tsinghua University,\\
	\footnotesize \it  Beijing 100084, China}

\maketitle

\vskip0.6in

\begin{center}
\begin{minipage}{120mm}
\begin{center}{\bf Abstract}\end{center}
~~~~~~~~ In this paper, we study the intrinsic relation between the  global injectivity of differentiable local homeomorphisms $F$ and the rate that tends to zero  of $Spec(F)$ in $\mathbb{R}^2$, where $Spec(F)$ denotes  the set of all (complex) eigenvalues  of  $DF(x)$, for all $x\in \mathbb{R}^2$.  This depends on the $W$-condition deeply, which extends  the $*$-condition and $B$-condition. The $W$-condition reveals the rate that tends to zero of real eigenvalues of $DF$
can not exceed $\displaystyle O\Big(x\ln x(\ln \frac{\ln x}{\ln\ln x})^2\Big)^{-1}$ by the half-Reeb component method. This improves the theorems of Guti\'{e}rrez-Nguyen  \cite{GN07} and Rabanal \cite{RR10}. 
The $W$-condition is optimal for the half-Reeb component method in this paper setting.
 \vskip0.23in

{\it   Key  words:} $W$-condition; Half-Reeb component; Jacobian conjecture.

 \vskip0.23in

 MSC(2010): 14R15; 14E07; 14E09.
\vskip0.23in

\end{minipage}
\end{center}
\vskip0.26in
\newpage
\section{Introduction}

  ~~~~On the long-standing Jacobian conjecture, it is still open even in the case $n=2$. There are many results on it,  see for example \cite{BCW82} and \cite{E00}.

A very important step, for example in $\mathbb{R}^2$, is the following result,
due to A. Fernandes, C. Guti\'{e}rrez, and R. Rabanal:
\begin{theorem}\label{T1}(\cite{FGR04a})
Let $X=(f,g):\mathbb{R}^2\rightarrow \mathbb{R}^2$ be a differentiable map.
For some $\varepsilon >0$, 
if 
\begin{equation}\label{S1}
Spec (X)\cap [0,\varepsilon )=\varnothing,
\end{equation}
then $X$ is injective.
\end{theorem}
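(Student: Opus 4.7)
The plan is to argue by contradiction using the half-Reeb component machinery pioneered by Guti\'errez. First I would observe that $0 \notin Spec(X)$ forces $\det DX(x) \neq 0$ everywhere, so $X$ is a local diffeomorphism; moreover neither $\nabla f$ nor $\nabla g$ can vanish (a zero gradient would null a row of $DX$), so the level sets of $f$ and of $g$ define two singularity-free $C^1$ foliations $\mathcal{F}(f)$ and $\mathcal{F}(g)$ of $\mathbb{R}^2$ that are transverse at every point. The strategy will be to show that non-injectivity is incompatible with the spectral gap via these foliations.

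Assume, toward a contradiction, that $X(p)=X(q)$ for some $p\neq q$. Following the Poincar\'e--Bendixson-type analysis of Guti\'errez, I would show that this non-injectivity forces one of the two foliations---say $\mathcal{F}(f)$, after possibly swapping coordinates---to contain a half-Reeb component $\mathcal{A}$, that is, a region homeomorphic to a closed triangle whose two non-compact sides are complete leaves escaping to infinity and whose third side is a compact segment transverse to the foliation. The next step is to translate the spectral hypothesis into a quantitative statement along the leaves of $\mathcal{F}(f)$ inside $\mathcal{A}$: since the tangent to a leaf lies in $\ker df$, the derivative of $g$ along that leaf is governed by $\det DX / |\nabla f|^2$, and the absence of eigenvalues of $DX$ in $[0,\varepsilon)$ yields, through an elementary eigenvalue computation, a uniform lower bound of the form $|(g\circ\gamma)'(s)|\ge \varepsilon\,\alpha(s)$ with an explicit geometric factor $\alpha$ on suitable portions of the leaf. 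Integrating this estimate along the two infinite leaves of $\mathcal{A}$ should force $g$ to be unbounded on $\mathcal{A}$, while the Reeb geometry, combined with the fact that $g$ is controlled by its values on the transverse segment and across transverse fibers of $\mathcal{F}(f)$, forces $g$ to be bounded on a suitable sub-region, producing the desired contradiction.

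The main obstacle will be the middle step: converting the algebraic condition $Spec(X)\cap[0,\varepsilon)=\varnothing$ into an integrable derivative estimate along leaves. Real eigenvalues of $DX$ lying in $[0,\varepsilon)$ are precisely what would permit $g$ to be nearly constant along a leaf, so ruling them out gives the bound pointwise; the delicate point is to control the regime where $DX$ has non-real eigenvalues (on which the hypothesis is automatic and yields no direct information), and to understand how the geometric factor $\alpha$ degenerates as one moves along a leaf toward infinity. Doing all of this with only the differentiability, and not $C^1$-regularity, of $X$---so that $DX$ may be discontinuous---is what makes the argument subtle; making the leafwise integration survive these issues is the heart of the Fernandes--Guti\'errez--Rabanal proof and the prototype for the refined $W$-condition analysis carried out in the remainder of the paper.
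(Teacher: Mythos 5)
Your overall skeleton --- assume non-injectivity, extract a half-Reeb component $\mathcal{A}$ for one of the foliations, and derive a contradiction by integrating a spectral lower bound against the boundedness of a coordinate function on $\mathcal{A}$ --- is the right one, and it is essentially the strategy this paper runs (note that the paper does not reprove Theorem \ref{T1}: it cites it and observes that condition (\ref{S1}) implies the $*$-condition, hence the $W$-condition, so the statement follows from Theorem \ref{main1} via Propositions \ref{2.1} and \ref{3.1}). However, your middle step is the wrong estimate and does not close. Along a leaf $\gamma$ of $\mathcal{F}(f)$ parametrized by arc length one has $(g\circ\gamma)'=\det DX/|\nabla f|$, and the hypothesis $Spec(X)\cap[0,\varepsilon)=\varnothing$ gives \emph{no} lower bound on this quantity: where the eigenvalues of $DX$ are non-real, say $a\pm bi$ with $b\neq 0$, the hypothesis is vacuous and $\det DX=a^2+b^2$ may be arbitrarily small; where both eigenvalues are real they may both be negative and tiny, so $|\det DX|$ still degenerates; and $|\nabla f|$ is uncontrolled. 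Hence the leafwise integration of $g$ along the non-compact edges cannot be forced to diverge. You have correctly identified this obstruction ("the regime where $DX$ has non-real eigenvalues") but you have not resolved it, and along the route you propose it cannot be resolved.

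The device that actually works (Proposition \ref{3.1} here, following Fernandes--Guti\'errez--Rabanal and Guti\'errez--Nguyen) is different: after a small rotation (Proposition \ref{2.2}) one may assume the projection $\Pi(\mathcal{A}_\theta)$ contains a half-line $[b,+\infty)$; for each $x$ one takes the extremal point $\big(x,H(x)\big)$ of the leaf $\alpha_x$ over the vertical line $\Pi^{-1}(x)$, where the tangency forces $\partial_y f_\theta=0$, so that $DF_\theta\big(x,H(x)\big)$ is triangular and $\Phi'(x)=\partial_x f_\theta\big(x,H(x)\big)$ is a genuine \emph{real} eigenvalue. The hypothesis then yields $\Phi'(x)\geqslant\varepsilon$ on a full-measure set of $x$, and integrating over $[a_0,C]$ with $C\to+\infty$ contradicts the boundedness of $\Phi$, whose image lies in $f_\theta(\Gamma)$ for the compact edge $\Gamma$. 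In other words, the spectral hypothesis is exploited only at the points where the Jacobian is forced to be triangular, which is exactly how one sidesteps the non-real-eigenvalue regime; your proposal never reaches these points. Replace the leafwise estimate on $g$ by this transversal estimate on $x\mapsto f_\theta\big(x,H(x)\big)$ and the argument closes.
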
 
 Theorem \ref{T1} is deep. If the assumptions (\ref{S1}) replaced 
 by $0\notin $  Spec $(F)$, then the conclusion is false, even
  for polynomial map $X$, as the Pinchuck's counterexample\cite{P94}.
   Pmyth and Xavier\cite{SX96} proved that there exists $n>2$ 
   and non-injective polynomial map  
   such that Spec$(X)\cap [0, +\infty)=\varnothing.$

Theorem \ref{T1} added to a long sequence of results on Markus-Yamabe conjecture\cite{MY60} and the eigenvalue conditions of some map for injectivity in dimension two. The Markus-Yamabe Conjecture has been solved
independently in 1993 by C. Gutierrez\cite{G95} and R. Fessler\cite{F95}.  It is false in dimension $n\geqslant3$ even for polynomial vector field\cite{CEG97}.
Theorem \ref{T1} also implies that the following conjecture  is true in dimension $n=2$.
\begin{conjecture}(\cite{CM98},~Conjecture 2.1)\label{C1.4}
	Let $F:\mathbb{R}^n\rightarrow \mathbb{R}^n$ be a ${C^1 }$ map.
	Suppose there exists  an $\varepsilon>0$ such that $|\lambda|\geqslant\varepsilon$~ for all the eigenvalues $\lambda$ of $F'(x)$ and all $x\in \mathbb{R}^n$.
	Then $F$ is injective.
\end{conjecture}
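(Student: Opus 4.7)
The eigenvalue hypothesis gives $|\det DF(x)|\ge\varepsilon^{n}$ everywhere, so $F$ is a $C^{1}$ local diffeomorphism of $\mathbb{R}^{n}$ with nowhere vanishing Jacobian. I would try to combine a planar base case with a proper-covering argument for the higher dimensional case, in order to cover all $n$ at once.

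\textbf{Step 1 (the case $n=2$).} Every real eigenvalue $\lambda$ of $DF(x)$ satisfies $|\lambda|\ge\varepsilon$, hence $\lambda\in(-\infty,-\varepsilon]\cup[\varepsilon,+\infty)$; in particular $\mathrm{Spec}(F)\cap[0,\varepsilon)=\varnothing$, and Theorem~\ref{T1} delivers injectivity directly.

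\textbf{Step 2 (the case $n\ge 3$).} The natural strategy is to upgrade $F$ from a local diffeomorphism to a covering map of $\mathbb{R}^n$, and then use simple connectedness of $\mathbb{R}^{n}$ to trivialise the covering. Since $|\det DF|$ is uniformly bounded away from zero, the only remaining ingredient is properness of $F$. Pull back the coordinate frame on the target by setting $X_{i}(x)=(DF(x))^{-1}e_{i}$; these $C^{0}$ vector fields are globally defined, and their flow lines are mapped by $F$ to straight lines in $\mathbb{R}^{n}$. One would estimate the length of each such trajectory lying over a compact set in the image to show that $F^{-1}(K)$ is compact for every compact $K$; conclude that $F$ is a proper local diffeomorphism, hence a covering map; and finally conclude injectivity from $\pi_{1}(\mathbb{R}^{n})=0$.

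\textbf{Main obstacle.} The essential gap in Step 2 is that the hypothesis controls only the product of eigenvalues of $DF(x)$ and not the operator norm of the inverse. The shear family
\[
A_{M}=\begin{pmatrix}1 & M\\ 0 & 1\end{pmatrix},\qquad M\to\infty,
\]
all have both eigenvalues equal to $1$ while $\|A_{M}^{-1}\|\to\infty$; transferred to our setting, the pulled-back vector fields $X_{i}$ may grow unboundedly along trajectories, and the length estimate needed for properness can fail. This is precisely where the half-Reeb component methods of the planar theory (which drive the proof of Theorem~\ref{T1}, and which the body of this paper sharpens via the $W$-condition) stop transferring: the planar topological dynamics have no direct analogue once $n\ge 3$, the Markus--Yamabe type counterexamples of \cite{CEG97} show that analogous conclusions do fail for vector fields, and a complete proof of Conjecture~\ref{C1.4} for $n\ge 3$ would require a genuinely new ingredient beyond what the two-dimensional framework supplies.
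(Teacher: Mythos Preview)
The statement you are addressing is presented in the paper as a \emph{conjecture}, not as a theorem: the paper does not claim to prove it for general $n$. What the paper actually asserts is the single sentence preceding the display, namely that Theorem~\ref{T1} implies the conjecture in dimension $n=2$. Your Step~1 reproduces exactly that observation: if $|\lambda|\ge\varepsilon$ for every eigenvalue of $DF(x)$, then in particular $\mathrm{Spec}(F)\cap[0,\varepsilon)=\varnothing$, and Theorem~\ref{T1} gives injectivity. So for the only case the paper treats, your argument is correct and coincides with the paper's.

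Your Step~2 goes beyond anything the paper attempts. The paper offers no argument for $n\ge 3$; it simply records the statement as an open conjecture from \cite{CM98}. Your proposed route via properness and covering spaces is a natural first idea, but as you yourself diagnose, it does not go through: the eigenvalue bound $|\lambda|\ge\varepsilon$ gives no control on $\|DF(x)^{-1}\|$, so the pulled-back fields $X_i$ need not be bounded and the length estimate for properness fails. Your shear example makes this precise. Thus Step~2 is not a proof but an explanation of why the obvious approach stalls, and that is consistent with the paper's stance that the higher-dimensional case is genuinely open.

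In summary: for $n=2$ you match the paper; for $n\ge 3$ neither you nor the paper has a proof, and your honest identification of the obstruction is appropriate.
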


The essential tool to prove Theorem \ref{T1} is making use of the concept of the half-Reeb component that we recall in Definition \ref{hRc}.

C. Guti\'{e}rrez and V. Ch. Nguyen \cite{GN07} study the geometrical  behavior of differentiable maps in $\mathbb{R}^2$ and
the following $*$-condition on the real eigenvalues of $DF$ under the half-Reeb component technique.

For each $\theta \in \mathbb{R}$, we denote by $R_\theta$ the linear rotation
\begin{equation}\label{eR}
R_{\theta }=\begin{pmatrix}\cos\theta  & -\sin \theta  \\
\sin \theta & \cos\theta 
\end{pmatrix}
\end{equation}
and define the map $F_{\theta }=R_{\theta }\circ F \circ R_{-\theta }$.

\begin{definition}(\cite{GN07}, $*$-condition)	
	A differentiable $F$ satisfies the $*$-condition if
	for each $\theta \in \mathbb{R}$,
	there does not exist a sequence $\mathbb{R}^2 \ni z_k\to
	\infty $
	such that, $F_\theta(z_k)\to T\in \mathbb{R}^2$ and
	$DF_\theta(z_k)$ has a real eigenvalue $\lambda_k \to 0$. 
\end{definition}

\begin{theorem}\label{T2}(\cite{GN07})
Suppose that  $X:\mathbb{R}^2\rightarrow \mathbb{R}^2$  is a differentiable local homeomorphism. 
Then:

(i) If $X $ satisfies $*$-condition, then $X$  is injective and its image
is a convex set. 
 
(ii) $X$ is a global homeomorphism of $\mathbb{R}^2$ if and only if $X$ satisfies $*$-condition and its image $X(\mathbb{R}^2)$ is dense in $\mathbb{R}^2$. 
\end{theorem}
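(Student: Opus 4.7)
The strategy is the half-Reeb component method already highlighted in the excerpt. Write $X=(f,g)$; since $X$ is a local homeomorphism, $\det DX\neq 0$ everywhere, so both $\nabla f_\theta$ and $\nabla g_\theta$ are non-vanishing for every $\theta$, and the level sets of $f_\theta$ form a nonsingular $C^0$ foliation $\mathcal{F}(f_\theta)$ of $\mathbb{R}^2$. I would organise the proof around a single dichotomy: either $X$ is injective with convex image, or some $\mathcal{F}(f_\theta)$ admits a half-Reeb component $\mathcal{A}$ in the sense of Definition \ref{hRc}.

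\textbf{Step 1: From non-injectivity or non-convexity to a half-Reeb component.} Suppose first that $X(p)=X(q)$ with $p\neq q$. After rotating by a suitable $\theta$, the leaves of $\mathcal{F}(f_\theta)$ through $p$ and $q$ are distinct, and a small deformation of $X(p)$ lifts to an arc $\alpha$ from $p$ to some $q'\neq p$ along which $f_\theta$ is monotone. Combining $\alpha$ with a segment of $f_\theta^{-1}(f_\theta(p))$ produces a simple closed curve bounding a disk $D$; a Poincar\'e--Bendixson style analysis of $\mathcal{F}(f_\theta)|_D$ forces the existence of a half-Reeb component. A parallel argument handles non-convexity: if a segment $\overline{ab}$ meets $X(\mathbb{R}^2)$ but is not contained in it, rotate so $\overline{ab}$ is vertical and observe that the fibers $f_\theta^{-1}(c)$ accumulate onto the missing sub-segment, which yields a half-Reeb component.

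\textbf{Step 2: Vanishing eigenvalues in $\mathcal{A}$.} Inside the half-Reeb component $\mathcal{A}$, $f_\theta$ is bounded, and a transversal argument forces $g_\theta$ to remain bounded along some sequence $z_k\in\mathcal{A}$ escaping to infinity, so $X_\theta(z_k)\to T\in\mathbb{R}^2$. The leaves of $\mathcal{F}(f_\theta)$ inside $\mathcal{A}$ must align with the direction in which $\mathcal{A}$ opens at infinity, forcing the partial derivative of $f_\theta$ in that direction to tend to zero along $z_k$. Since $\nabla f_\theta$ itself does not vanish and $DX_\theta$ has the row $\nabla f_\theta$ followed by $\nabla g_\theta$, this degeneracy produces a real eigenvalue $\lambda_k\to 0$ of $DX_\theta(z_k)$, contradicting the $*$-condition and proving (i).

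\textbf{Step 3: Part (ii) and main obstacle.} For $(\Leftarrow)$: by (i), $X$ is an injective local homeomorphism with convex image; a dense convex subset of $\mathbb{R}^2$ equals $\mathbb{R}^2$, and an injective local homeomorphism onto $\mathbb{R}^2$ is a homeomorphism by invariance of domain. For $(\Rightarrow)$: a global homeomorphism is surjective, hence its image is dense, and since $X(\mathbb{R}^2)=\mathbb{R}^2$ no $\mathcal{F}(f_\theta)$ can contain a half-Reeb component (Step 1's construction would otherwise produce a gap in the image or a collision of fibers); the contrapositive of Step 2 then yields the $*$-condition. The main obstacle is Step 2, namely extracting a sequence $z_k\in\mathcal{A}$ along which $X_\theta(z_k)$ converges \emph{and simultaneously} a real eigenvalue of $DX_\theta(z_k)$ vanishes; this demands a fairly quantitative description of the asymptotic geometry of leaves inside a half-Reeb component. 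A secondary subtlety is the reduction from non-convexity (rather than merely non-surjectivity) of $X(\mathbb{R}^2)$ to the production of a half-Reeb component.
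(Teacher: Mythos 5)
Your overall strategy (reduce non-injectivity to the existence of a half-Reeb component, then extract from the component a sequence violating the $*$-condition) is exactly the route the paper takes: your Step 1 is Proposition \ref{2.1} (cited from \cite{FGR04a}), and your Step 2 is the content of the paper's Proposition \ref{3.1} specialized to $P\equiv 1$. However, Step 2 --- which you yourself flag as the main obstacle --- is where all the real work lies, and the heuristic you offer for it is not the correct mechanism. It is not that ``the leaves align with the direction in which $\mathcal{A}$ opens, forcing the directional derivative of $f_\theta$ to zero.'' The actual argument runs as follows. By Proposition \ref{2.2} one may rotate so that the half-Reeb component $\mathcal{A}$ satisfies $\Pi(\mathcal{A})\supseteq[b,+\infty)$, where $\Pi(x,y)=x$. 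For large $x\geqslant a>b$ there is exactly one leaf $\alpha_x$ of $\mathcal{F}(f_\theta)$ in $\mathcal{A}$ whose maximal abscissa equals $x$; let $H(x)$ be the ordinate of that turning point and set $\Phi(x)=f_\theta\big(x,H(x)\big)$. At a turning point the leaf is tangent to the vertical line, so $\partial_y f_\theta\big(x,H(x)\big)=0$ and $DF_\theta\big(x,H(x)\big)$ is triangular; hence wherever $\Phi$ is differentiable, $\Phi'(x)=\partial_x f_\theta\big(x,H(x)\big)$ is a genuine real eigenvalue of $DF_\theta$. Since $\Phi$ takes values in $f_\theta(\Gamma)$ ($\Gamma$ the compact edge) it is bounded, and transversality to $\Gamma$ makes it strictly monotone, so $\Phi'$ is integrable on $(a,+\infty)$ and $\liminf_{x\to+\infty}\Phi'(x)=0$ along the full-measure set of differentiability; finally $F_\theta\big(x,H(x)\big)$ stays in the bounded set $F_\theta(\mathcal{A})$ and subconverges to some $T$. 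That is the sequence violating the $*$-condition. Without the triangularity observation at the turning points, you have given no reason that the small quantity you produce is an eigenvalue of $DX_\theta$ at all.

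A second, independent problem: in Step 3 your proof of $(\Rightarrow)$ is logically backwards. The contrapositive of Step 2 is ``$*$-condition $\Rightarrow$ no half-Reeb component,'' not ``no half-Reeb component $\Rightarrow$ $*$-condition''; the $*$-condition can perfectly well fail for a map with no half-Reeb components, so the absence of a gap in the image proves nothing about it. The correct argument is that a global homeomorphism of $\mathbb{R}^2$ is proper: if $z_k\to\infty$ and $F_\theta(z_k)\to T$, then $z_k=F_\theta^{-1}\big(F_\theta(z_k)\big)\to F_\theta^{-1}(T)$, a contradiction, so no sequence of the forbidden type exists and the $*$-condition holds vacuously. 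Your $(\Leftarrow)$ direction is fine: the image is open (local homeomorphism), convex and dense, hence all of $\mathbb{R}^2$, and injectivity plus invariance of domain finishes it.
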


$*$-condition is somewhat weaker than condition (1.1), thus one can obtain the Theorem \ref{T1} from Theorem \ref{T2} (i) by a standard procedure.

 In other new case, the essential difficulty is that the eigenvalues of $DF$ which may be tending to zero implies  $F$ is globally injective. R. Rabanal\cite{RR10} extended the $*$ condition to the so called $B$-condition.
\begin{definition}(\cite{RR10}, $B$-condition)
	The differentiable map $F:\mathbb{R}^2 \to \mathbb{R}^2$ satisfies the  $B$-condition 
	if for each $\theta\in\mathbb{R}$, there does not exist a sequence  $(x_k,y_k)\in \mathbb{R}^2$ with $ x_k \to +\infty $ such that $F_\theta(x_k,y_k)\to T\in \mathbb{R}^2$  and $DF_\theta(x_k,y_k)$ has a real eigenvalue $\lambda_k$ satisfying $\lambda_k x_k\to 0.$
\end{definition}

He obtains the following theorem where Theorem \ref{T2}  holds if one replaced $*$-condition by $B$-conditon.

\begin{theorem} \label{T3}(\cite{RR10})
	Suppose that the differentiable map $F: \mathbb{R}^2 \to \mathbb{R}^2$  satisfies the $B$-condition and $\det DF(z)\neq 0, \forall z\in \mathbb{R}^2 $, then $F$ is a topological embedding.
\end{theorem}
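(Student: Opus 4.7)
The plan is to reduce global injectivity of $F$ to the nonexistence of a half-Reeb component for an associated singular foliation, and then to extract from such a component a sequence that violates the $B$-condition. Since $\det DF(z)\neq 0$ on all of $\mathbb{R}^2$, the inverse function theorem already guarantees that $F$ is a $C^1$ local homeomorphism; once global injectivity is established, the standard argument behind Theorem \ref{T2} upgrades this to the conclusion that $F$ is a topological embedding. Thus the substantive task is to prove that $F$ is one-to-one, which I would do by contradiction.

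Assume $F=(f,g)$ is not injective. Following the rotation trick used in Theorem \ref{T2}, one chooses an angle $\theta\in\mathbb{R}$ and works with $F_\theta=R_\theta\circ F\circ R_{-\theta}=(f_\theta,g_\theta)$, arranging that the singular foliation $\mathcal{F}(f_\theta)$ whose leaves are the connected components of the level sets $\{f_\theta=\mathrm{const}\}$ admits a half-Reeb component $A$ in the sense of Definition \ref{hRc}. After shrinking $A$ if necessary, one may assume that $A$ minus a compact set lies in a half-plane $\{x\geqslant x_0\}$ with $x_0>0$, so that $x\to+\infty$ along $A$, while $f_\theta|_A$ takes values in a bounded interval. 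This is the geometric input on which everything else hinges.

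From this half-Reeb component I would then produce a sequence $(x_k,y_k)\in A$ with $x_k\to+\infty$, with $F_\theta(x_k,y_k)$ converging in $\mathbb{R}^2$, and with a real eigenvalue $\lambda_k$ of $DF_\theta(x_k,y_k)$ satisfying $\lambda_k x_k\to 0$; this directly contradicts the $B$-condition. Because $\det DF_\theta\neq 0$, the tangent direction to $\mathcal{F}(f_\theta)$ at such points is a real eigendirection whose eigenvalue is controlled by $\partial_x f_\theta$ along a transversal segment. The fact that $f_\theta|_A$ is bounded while $A$ is unbounded in the $x$-direction forces the integral $\int \partial_x f_\theta\,dx$ taken along long horizontal chords inside $A$ to remain bounded, and a pigeonhole/integrability argument then extracts points on which $\partial_x f_\theta=o(1/x_k)$. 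Convergence of $F_\theta(x_k,y_k)$ is enforced by choosing the points along a transversal arc on which $g_\theta$ has controlled variation and passing to a subsequence.

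The main obstacle is the quantitative eigenvalue decay in the last step. Under the $*$-condition treated by Guti\'errez--Nguyen, it is enough to find $\lambda_k\to 0$, which comes from compactness of $f_\theta(A)$ and the mean value theorem. Under the $B$-condition one needs the strictly sharper estimate $\lambda_k=o(x_k^{-1})$, and this is precisely where the half-Reeb component geometry must be pushed further than in Theorem \ref{T2}: one has to exploit the long horizontal extent of $A$ against the bounded range of $f_\theta$, rather than just its compactness. A secondary but nonroutine difficulty is ensuring that the companion component $g_\theta$ does not oscillate along the chosen sequence, so that $F_\theta(x_k,y_k)$ actually converges; this is handled by a Cauchy-type argument using transversal segments to $\mathcal{F}(f_\theta)$, but it must be coordinated with the choice of sequence made for the eigenvalue estimate.
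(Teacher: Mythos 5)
Your proposal follows essentially the same route as the paper: Theorem \ref{T3} is only cited there from Rabanal, but the paper's Proposition \ref{3.1} is exactly your argument with a general weight $P\in\mathcal{P}$ in place of $P(x)=x$ --- reduce to a half-Reeb component whose first-coordinate projection is unbounded (Propositions \ref{2.1} and \ref{2.2}), parametrize the vertical-tangency points of the leaves by $(x,H(x))$ so that $\Phi(x)=f_\theta\big(x,H(x)\big)$ is bounded and monotone with $\Phi'(x)=\partial_x f_\theta\big(x,H(x)\big)$ a real eigenvalue of $DF_\theta$, and play the divergence of $\int dx/x$ against the boundedness of $\Phi$ to force $\liminf x_k\Phi'(x_k)=0$, contradicting the $B$-condition. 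The one imprecision is that the integral must be taken along the curve of tangency points $(x,H(x))$, where $\partial_y f_\theta=0$ and hence $\partial_x f_\theta$ genuinely is an eigenvalue, rather than along arbitrary horizontal chords of the component.
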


In fact, Theorem \ref{T3} improves the main results of \cite{GN07}, (see \cite{RP02},\cite{RR05}).

In 2014, F. Braun and V. S. Jean\cite{BJ14} considered the relation between the half-Reeb component and  Palais-Smale condition for global injectivity.

Many references on other aspects of half Reeb component including in higher dimensional situations see (\cite{RP97}, \cite{LX19},\cite{LZ19}, \cite{MJ13}, \cite{GM09}).

For example, C. Guti\'{e}rrez and  C. Maquera considered  half-Reeb components for the global injectivity in dimension 3.
\begin{theorem}(\cite{GM09})
Let $Y= (f,g,h) : \mathbb{R}^3 \to \mathbb{R}^3$ be a polynomial map such that $Spec(Y) \cap [0,\varepsilon )=\emptyset$, for some $\varepsilon >0$. If $codim(SY)\geqslant 2$, then $Y$ is a bijection.
\end{theorem}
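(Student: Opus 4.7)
The plan is to reduce the three-dimensional problem to the planar situation of Theorem~\ref{T1} by a slicing argument, using $\operatorname{codim}(SY)\geqslant 2$ to ensure that the bad locus is thin enough for injectivity to glue across. First I observe that the spectral condition $\operatorname{Spec}(Y)\cap[0,\varepsilon)=\varnothing$ forces $\det DY(z)\neq 0$ for every $z\in\mathbb{R}^3$, so $Y$ is already a global local diffeomorphism. Since $Y$ is polynomial and $SY$ is semi-algebraic of codimension at least two, the complement $V:=\mathbb{R}^3\setminus Y^{-1}(SY)$ is open, path-connected, and simply connected, and $Y|_V$ is proper onto its image.

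Next I would slice. Choosing a generic linear form $\ell:\mathbb{R}^3\to\mathbb{R}$, for almost every $c\in\mathbb{R}$ the fibre $L_c=\ell^{-1}(c)$ is a plane meeting $Y^{-1}(SY)$ in a set of dimension at most one, by Sard's theorem and the codimension hypothesis. Composing $Y|_{L_c}$ with a generic planar projection $\pi:\mathbb{R}^3\to\mathbb{R}^2$ yields a differentiable map $\widetilde Y_c:\mathbb{R}^2\to\mathbb{R}^2$. A uniform argument, based on the polynomial growth of $DY$ together with a compactness-at-infinity lemma coming from $\operatorname{codim}(SY)\geqslant 2$, should yield $\operatorname{Spec}(\widetilde Y_c)\cap[0,\varepsilon')=\varnothing$ for a uniform $\varepsilon'>0$. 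Theorem~\ref{T1} then applies to each $\widetilde Y_c$, showing that $Y|_{L_c}$ is injective with convex image on every generic slice.

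Finally I would glue. Two points $p_1\neq p_2$ of $V$ with $Y(p_1)=Y(p_2)$ either lie in the same slice $L_c$, which is excluded by the slicewise injectivity, or lie in different slices, which is excluded by a one-parameter monodromy argument in $c$ using the convex-image property to prevent any fibre of $Y$ from crossing two different generic slices. Injectivity on $V$ extends across $Y^{-1}(SY)$ by a Hartogs-type argument enabled by $\operatorname{codim}(SY)\geqslant 2$ and the polynomiality of $Y$, and surjectivity follows from the openness together with the closedness of $Y(\mathbb{R}^3)$. The main obstacle I expect is establishing the uniform spectral gap $\varepsilon'>0$ for the slice maps $\widetilde Y_c$: transferring a spectral bound from $DY$ to the derivative of a two-dimensional reduction is not automatic for non-symmetric operators, and it is precisely here that the codimension hypothesis on $SY$ has to be used quantitatively, which I expect to be the technical heart of the argument.
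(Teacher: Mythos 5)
The paper does not actually prove this statement: it is quoted verbatim from Guti\'errez--Maquera \cite{GM09} as background material, so there is no internal proof to compare yours against. Judged on its own terms, your proposal has genuine gaps.

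The decisive one is the transfer of the spectral hypothesis to the slice maps. For a non-symmetric $3\times 3$ matrix $A$ with $\mathrm{Spec}(A)\cap[0,\varepsilon)=\varnothing$, the $2\times 2$ matrix obtained by restricting $A$ to a $2$-plane and projecting onto another $2$-plane has eigenvalues that are simply not controlled by those of $A$; they can lie anywhere in $\mathbb{C}$, including at $0$. So the claim that Theorem \ref{T1} ``applies to each $\widetilde Y_c$'' is unjustified and in general false, and no amount of genericity in the choice of $\ell$ and $\pi$, nor the polynomial growth of $DY$, repairs this --- you flag it as the ``technical heart,'' but it is in fact the point where the strategy breaks rather than a detail to be deferred. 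The gluing step is equally unsupported: injectivity of $Y$ on each generic plane $L_c$, even with convex slice images, does not preclude $Y(p_1)=Y(p_2)$ for $p_1,p_2$ in distinct slices, and the proposed ``one-parameter monodromy argument'' is not supplied. Two smaller errors: removing a codimension-two set from $\mathbb{R}^3$ does not in general leave a simply connected complement (remove a line), and there is no real-variable ``Hartogs-type'' extension of injectivity across such a set. For the record, the actual proof in \cite{GM09} works with the codimension-one foliations $\mathcal{F}(f)$, $\mathcal{F}(g)$, $\mathcal{F}(h)$ given by the level sets of the components: the hypotheses are used to exclude half-Reeb components (in the spirit of Definition \ref{hRc} and Proposition \ref{2.1} of this paper) and to show that each foliation is a foliation of $\mathbb{R}^3$ by planes, from which injectivity and then bijectivity follow; the codimension condition on $SY$ enters through properness and topological-degree considerations, not through slicing.
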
 

Recently, W. Liu prove the following theorem by the Minimax method.
\begin{theorem} \label{LZ}(\cite{LZ19})
Let $F: \mathbb{R}^n \to \mathbb{R}^n$  be a $C^1$ map,  $n\geqslant 2$.
If for some $\varepsilon >0$,
\[ 0\notin Spec(F)~~~\mbox{and}~~~ Spec(F+F^T) \subseteq  (-\infty,-\varepsilon)~\mbox{or} ~(\varepsilon,+\infty),\]
 then $F$ is globally injective.
\end{theorem}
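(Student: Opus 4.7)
My approach would be to prove the theorem directly by establishing a global monotonicity estimate from the symmetric-part spectral hypothesis. The key algebraic identity is $\langle DF(x)v,v\rangle = \tfrac{1}{2}\langle(DF(x)+DF(x)^T)v,v\rangle$, so a uniform sign-definite bound on $Spec(DF+DF^T)$ immediately converts into a uniform coercivity estimate on the quadratic form $v\mapsto\langle DF(x)v,v\rangle$, from which injectivity follows by integration along line segments.

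I would first verify that the dichotomy in the hypothesis is globally uniform. Since $DF(x)+DF(x)^T$ is symmetric, its eigenvalues are real and continuous in $x$; on the connected set $\mathbb{R}^n$ they cannot jump across the open gap $(-\varepsilon,\varepsilon)$, so exactly one of $Spec(DF+DF^T)\subseteq(\varepsilon,+\infty)$ or $Spec(DF+DF^T)\subseteq(-\infty,-\varepsilon)$ holds at every point. By replacing $F$ with $-F$ if necessary (which preserves all hypotheses and injectivity), I may assume the positive case. Then for any $a\neq b$ in $\mathbb{R}^n$, integrating along the segment $\gamma(t)=b+t(a-b)$ gives
\[
\langle F(a)-F(b),\, a-b\rangle \;=\; \int_0^1\langle DF(\gamma(t))(a-b),\,a-b\rangle\,dt \;\geq\; \frac{\varepsilon}{2}|a-b|^2 \;>\; 0,
\]
which forces $F(a)\neq F(b)$.

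The same conclusion also admits a Minimax packaging, which presumably matches the proof in \cite{LZ19}: assuming $F(a)=F(b)$ with $a\neq b$, set $\phi(x)=\tfrac{1}{2}|F(x)-F(a)|^2$, observe that $a$ and $b$ are isolated minima with $\phi=0$ (by the inverse function theorem and $0\notin Spec(DF)$), and apply the mountain pass theorem. The Palais--Smale condition here reduces to the inequality $|DF(x)^T w|\geq\tfrac{\varepsilon}{2}|w|$ (a direct consequence of the symmetric-part hypothesis) combined with the monotonicity estimate above. A mountain-pass critical point $c$ would satisfy both $\phi(c)>0$ and $DF(c)^T(F(c)-F(a))=0$; invertibility of $DF(c)$ then forces $F(c)=F(a)$ and hence $\phi(c)=0$, a contradiction.

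The main obstacle I foresee in either approach is the same, and it is essentially a continuity issue: the spectral gap $\varepsilon$ must be genuinely uniform in $x$, since without a fixed positive lower bound neither the strict positivity of the integrand in the direct approach nor the Palais--Smale condition in the minimax approach survives. Once this uniformity is secured by the connectedness-of-$\mathbb{R}^n$ argument above, the rest of the proof is essentially mechanical.
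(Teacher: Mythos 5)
Your direct argument is correct and complete. Note, however, that this paper does not actually prove Theorem \ref{LZ}: it is stated as imported background from the preprint \cite{LZ19} (which the author says is proved ``by the Minimax method''), so there is no in-paper proof to compare against. Your first route --- the coercivity $\langle DF(x)v,v\rangle=\frac{1}{2}\langle(DF(x)+DF(x)^T)v,v\rangle\geq\frac{\varepsilon}{2}|v|^2$ followed by integration along the segment from $b$ to $a$ --- is the classical strong-monotonicity argument; it is more elementary than a minimax proof, it makes the hypothesis $0\notin Spec(F)$ redundant (coercivity of the quadratic form already forces $DF(x)$ to be invertible), and your connectedness argument correctly handles the only delicate point, namely that the sign of the symmetric part cannot switch between the two rays. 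Your second, mountain-pass packaging is also sound and is presumably close in spirit to the cited proof: the Palais--Smale property of $\phi(x)=\frac{1}{2}|F(x)-F(a)|^2$ follows from $|DF(x)^Tw|\geq\frac{\varepsilon}{2}|w|$ exactly as you indicate, and a mountain-pass critical point at a positive level is excluded by invertibility of $DF$. Either route suffices, and the first is self-contained.
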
 

Let us return to study approaching to zero of the eigenvalues of $DF$ by the half-Reeb component method in $\mathbb{R}^2$.

In this paper, we define the $W$-condition and obtain the following result.

For the convenience of our statement, let us set

 $\mathcal{P}=\Big\{P~\big|~\mathbb{R}^+\to \mathbb{R}^+, P ~\mbox{is nondecreasing  and} 
 ~\forall M>0, \mbox{there exists large constant} $
$N$ which depends on $M$ and $P$, such that
$\displaystyle\int_{2}^{N}\frac{1}{P(x)}dx> M\Big\}.$

Thus, $\mathcal{P}$ contains many functions, such as 1, $x$, $x\ln(x+1)$,
$x\ln(1+x)\ln\big(1+\ln(1+x)\big)$ and it does not include 
$x^\alpha, \forall \alpha>1; ~x\ln^\beta (x+1), \forall\beta>1.$

\begin{definition}($W$-condition) \label{W}
	
	A differentiable map $F$ satisfies the $W$-condition 
	if for each $\theta\in\mathbb{R}$ (see (\ref{eR})), there does not exist a sequence $(x_k,y_k)\in \mathbb{R}^2$ with $x_k \to +\infty $ such that $F_\theta(x_k,y_k)\to T\in \mathbb{R}^2$  and $DF_\theta(x_k,y_k)$ has a real eigenvalue $\lambda_k$ satisfying $\lambda_k P(x_k)\to 0$,
	where $P\in \mathcal{P} $.
\end{definition}

\begin{remark}\label{rm1}
	$W$-condition obviously contains $*$-condition and $B$-condition.
	Let $P(x)=x\ln(x+1)\in \mathcal{P}$, the $W$-condition with the $P$ is weaker than
	$*$-condition and $B$-condition. It seems can not be improved in this setting by making use of the half-Reeb component method. The $W$-condition profoundly reveals  the optimal rate that tends to zero of eigenvalues of $DF$
	must be in the interval $\displaystyle \Big(O(x\ln^\beta x)^{-1}, \forall \beta>1$, 	$O\Big(x\ln x\big(\ln \frac{\ln x}{\ln\ln x}\big)^2\Big)^{-1} \Big] $ by the half-Reeb component method.
\end{remark}

\begin{remark}\label{rm2}
     If $x_k$ exchanges $y_k$ in definition \ref{W}, then it is also applied.  
	
\end{remark}

\begin{remark}\label{rm3}
For example, let $g(x,y)$ is a $C^1$ function such that $g(x,y)=\frac{y}{x\ln x}$ where  $x\geqslant 2$. The map $F(x,y)=(e^{-x},g(x,y))$ satisfies $\det JF=-e^{-x}\frac{1}{x\ln x}\neq 0$.
Then, for $\{x_k\}\subseteq [2,+\infty)$, $F(x_k, 0)=(e^{-x_k},0)\to P=(0,0)$, as $x_k\to +\infty$. $JF(x_k,0)$ has a real eigenvalue
\[\frac{1}{x_k\ln x_k}=\lambda_k\to 0.\]
However, the limit of the product $x_k\ln x_k$ is away from zero.
\end{remark}

We use the $W$-condition and obtain the following results.

\begin{theorem} \label{main1}
Let $F: \mathbb{R}^2 \to \mathbb{R}^2$ be a differentiable local homeomorphism. If $ F $ satisfies $W$-condition, then $F$ is  injective
and $F(\mathbb{R}^2)$ is convex.
\end{theorem}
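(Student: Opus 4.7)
The plan is to argue by contradiction, following the general strategy of Fernandes-Guti\'errez-Rabanal \cite{FGR04a}, Guti\'errez-Nguyen \cite{GN07}, and Rabanal \cite{RR10}, but upgrading the final integral estimate so that the full freedom $P\in\mathcal{P}$ can be exploited. Suppose $F=(f,g)$ is a differentiable local homeomorphism satisfying the $W$-condition with some $P\in\mathcal{P}$, yet either $F$ is not injective or $F(\mathbb{R}^2)$ fails to be convex. By the standard half-Reeb component machinery (the same reduction used to deduce Theorem \ref{T2} and Theorem \ref{T3}), there then exists some angle $\theta\in\mathbb{R}$ such that the rotated map $F_\theta=R_\theta\circ F\circ R_{-\theta}$, written as $F_\theta=(\tilde f,\tilde g)$, admits a half-Reeb component $\mathcal{A}$ for the foliation $\mathcal{F}(\tilde g)$ whose leaves escape to infinity in the $+x$ direction (using Remark \ref{rm2} to swap coordinates if needed).

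The next step is to set up the integral inequality on $\mathcal{A}$. Each leaf $\gamma\subset\mathcal{A}$ of $\mathcal{F}(\tilde g)$ can be parametrized locally as a graph $y=y(x)$, and $\tilde g$ is constant along $\gamma$. Because $F_\theta(\mathcal{A})$ is contained in a bounded set (one of the defining features of a half-Reeb component), the first coordinate $\tilde f$ has uniformly bounded total variation along every such leaf. Differentiating $\tilde f$ along $\gamma$ and using $\tilde g_x+\tilde g_y\,y'=0$ gives $\tfrac{d\tilde f}{dx}=\det DF_\theta/\tilde g_y$, an expression that can be rewritten in terms of the real eigenvalues of $DF_\theta$ along $\gamma$ (the half-Reeb geometry, together with the local-homeomorphism assumption $\det DF_\theta\neq 0$, forces these eigenvalues to be real on a cofinal subset of points on $\gamma$, as in \cite{GN07,RR10}). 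One thus obtains along a suitable leaf $\gamma$ an inequality of the form
\begin{equation*}
\int_{a}^{X}\bigl|\lambda(x)\bigr|\,dx\;\leq\;C,\qquad X\to+\infty,
\end{equation*}
where $\lambda(x)$ is the real eigenvalue of $DF_\theta$ produced by the Reeb geometry at the point $(x,y(x))$ and $C$ depends only on $\mathcal{A}$.

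The final step invokes the $W$-condition in its quantitative form. If no sequence $(x_k,y(x_k))$ with $x_k\to+\infty$ along $\gamma$ satisfies $\lambda_k P(x_k)\to 0$, then there exists $c>0$ and $x_0\geq 2$ with $|\lambda(x)|\geq c/P(x)$ for all $x\geq x_0$ on $\gamma$. Substituting into the previous bound yields
\begin{equation*}
c\int_{x_0}^{X}\frac{dx}{P(x)}\;\leq\;\int_{x_0}^{X}\bigl|\lambda(x)\bigr|\,dx\;\leq\;C,
\end{equation*}
uniformly in $X$, contradicting the defining divergence property of the class $\mathcal{P}$. This contradiction rules out the existence of the half-Reeb component and hence proves both injectivity and convexity of the image.

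The main obstacle is not the final integral estimate, which by design matches the class $\mathcal{P}$, but the middle step: producing a leaf $\gamma$ on which the real eigenvalue $\lambda(x)$ appearing in the expression for $d\tilde f/dx$ can actually be identified with the eigenvalue of $DF_\theta$ controlled by the $W$-condition, with the right sign and monotonicity behaviour. This requires a careful selection of the leaf inside $\mathcal{A}$, a local choice of eigenvector frame avoiding branch points of the eigenvalue decomposition, and a monotonicity argument (using that $P$ is nondecreasing) so that the inequality $|\lambda(x)|\geq c/P(x)$ propagates from a subsequence to all large $x$ on $\gamma$. Once this geometric-analytic bridge is in place, Remarks \ref{rm1}--\ref{rm3} and the properties of $\mathcal{P}$ close the argument.
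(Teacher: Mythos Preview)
Your overall architecture---assume a half-Reeb component, rotate so that its $x$-projection is an unbounded interval, extract a bounded monotone scalar function whose derivative is a real eigenvalue of $DF_\theta$, and then contradict the divergence of $\int dx/P(x)$---is exactly the route the paper takes (Proposition~\ref{3.1} followed by the one-line proof in Section~4). The final integral step you wrote is identical to the paper's.

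The genuine gap is the step you yourself flag as the ``main obstacle'': identifying $d\tilde f/dx$ along a leaf of $\mathcal{F}(\tilde g)$ with an eigenvalue of $DF_\theta$. Your formula $d\tilde f/dx=\det DF_\theta/\tilde g_y$ is correct, but at a generic point of a leaf this quantity is \emph{not} an eigenvalue of $DF_\theta$, and the references \cite{GN07,RR10} do not make any such identification along a leaf. The fix is not a ``careful selection of the leaf'' or an ``eigenvector frame avoiding branch points''; it is a change of curve. The paper (following \cite{GN07,RR10}) integrates not along a leaf but along the transversal section $x\mapsto(x,H(x))$, where $(x,H(x))$ is the point at which the leaf $\alpha_x$ of $\mathcal{F}(f_\theta)$ attains its maximal first coordinate. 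At such a point the leaf tangent is vertical, so $\partial_y f_\theta(x,H(x))=0$; the Jacobian $DF_\theta(x,H(x))$ is then lower triangular and $\Phi'(x)=\partial_x f_\theta(x,H(x))$ is literally an eigenvalue, with no further argument needed. The bounded, strictly monotone function $\Phi(x)=f_\theta(x,H(x))$ (the $f_\theta$-value of the leaf $\alpha_x$) feeds directly into your inequality $\int_{a_0}^{C}\Phi'(x)\,dx\geq \varepsilon_0\int_{a_0}^{C}dx/P(x)$, and the contradiction with $P\in\mathcal{P}$ follows exactly as you wrote. Once you replace ``along a leaf'' by ``along the curve of extremal points of leaves,'' your proof coincides with the paper's.
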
 
Obviously, Theorem \ref{main1} implies Theorem \ref{T2} and Theorem \ref{T3}(i).

Next, we also have the following results.
\begin{theorem}\label{main2}
	Let $F: \mathbb{R}^2 \to \mathbb{R}^2$ be a differentiable Jacobian map. If $ F $ satisfies $W$-condition, then $F$ is a globally injective, measure-preserving map with convex image.
\end{theorem}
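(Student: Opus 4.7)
The plan is to obtain Theorem \ref{main2} as a direct consequence of Theorem \ref{main1}, augmented only by the change-of-variables formula. The hypothesis that $F$ is a Jacobian map means $\det DF(x)$ is a nonzero constant on $\mathbb{R}^2$ (normalized so that $\det DF\equiv 1$), which delivers two facts essentially for free: by the inverse function theorem $F$ is a local $C^1$-diffeomorphism, and $|\det DF|\equiv 1$ pointwise.

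First I would note that a local $C^1$-diffeomorphism is in particular a differentiable local homeomorphism, so the hypotheses of Theorem \ref{main1} are met (the $W$-condition is assumed). Applying that theorem yields at once the global injectivity of $F$ and the convexity of $F(\mathbb{R}^2)$; these are two of the three conclusions sought.

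Next I would establish the measure-preserving property via the classical change-of-variables formula for injective $C^1$ maps. For any Borel set $A\subseteq\mathbb{R}^2$,
\begin{equation*}
\mathrm{vol}\bigl(F(A)\bigr)=\int_{A}|\det DF(x)|\,dx=\int_{A}1\,dx=\mathrm{vol}(A),
\end{equation*}
where the injectivity established in the previous step prevents overcounting in the area formula and $|\det DF|\equiv 1$ kills the Jacobian factor. Hence $F$ preserves Lebesgue measure.

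I do not anticipate a serious obstacle here: once Theorem \ref{main1} is in hand, Theorem \ref{main2} is essentially its corollary for the Jacobian case. The only point that deserves a line of verification is that the change-of-variables identity in the form $\mathrm{vol}(F(A))=\int_{A}|\det DF|\,dx$ applies to an injective $C^1$ map defined on all of $\mathbb{R}^2$, which is standard (e.g., by exhaustion via compact sets where $F$ is a diffeomorphism onto its image). All genuinely new work is contained in the half-Reeb-component analysis underlying Theorem \ref{main1}, not here.
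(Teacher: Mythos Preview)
Your proposal is correct and follows essentially the same route as the paper: both derive injectivity and convexity directly from Theorem~\ref{main1} and obtain the measure-preserving property from the change-of-variables formula using $|\det DF|\equiv 1$. The only cosmetic difference is that the paper additionally records the converse implication (measure-preserving $\Rightarrow \det DF\equiv 1$) in general dimension before invoking Theorem~\ref{main1}, whereas you establish injectivity first and then apply the area formula---your ordering is in fact cleaner, since the change-of-variables identity you use legitimately requires the injectivity already in hand.
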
 
The Theorem \ref{main2}  improves the main results of  GN  \cite{GN07} and Ra \cite{RR10}.

Since the map is injective in Theorem in  \ref{main2}, we obtain some fixed point theorem,
that's the following corollary.

\begin{corollary}\label{c1}
	If $F$ is as  in Theorem \ref{main2} and $Spec(F) \subseteq  \{z\in \mathbb{C} \big| |z|<1 \}$ , then $F$ has 
	at most one fixed point.
\end{corollary}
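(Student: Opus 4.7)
The plan is to leverage the two-dimensional Markus-Yamabe theorem, which the introduction already cites: in $\mathbb{R}^2$, if a $C^1$ vector field $V$ satisfies $V(0)=0$ and has all eigenvalues of $DV(x)$ in the open left half-plane for every $x$, then the origin is globally asymptotically stable for $\dot x = V(x)$, and in particular is its unique zero. I will build such a $V$ from $F$ so that fixed points of $F$ correspond bijectively to zeros of $V$, and then the Markus-Yamabe conclusion will give uniqueness at once.

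First I would dispose of the trivial case: if $F$ has no fixed point then ``at most one fixed point'' holds vacuously. Otherwise, fix a fixed point $p$ and set $H(x) := F(x+p)-(x+p)$. Then $H(0)=0$, and $H(y)=0$ iff $F(y+p)=y+p$, so zeros of $H$ are in bijection with fixed points of $F$ via $y \mapsto y+p$. It therefore suffices to show that $H$ has $0$ as its only zero.

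The next step is the spectral check. Differentiating gives $DH(x)=DF(x+p)-I$, so the eigenvalues of $DH(x)$ are exactly $\lambda-1$ with $\lambda\in Spec(DF(x+p))$. The hypothesis $Spec(F)\subseteq\{z\in\mathbb{C} : |z|<1\}$ then places each $\lambda-1$ in the open disk $\{z\in\mathbb{C} : |z+1|<1\}$. Expanding $|z+1|^2<1$ yields $\operatorname{Re}(z) < -\tfrac{1}{2}|z|^2 \le 0$, which shows this disk is strictly contained in the open left half-plane. Hence every eigenvalue of $DH(x)$ has negative real part at every $x\in\mathbb{R}^2$, and the 2D Markus-Yamabe theorem (Guti\'errez \cite{G95}, Fessler \cite{F95}) forces $0$ to be the unique equilibrium of $\dot x=H(x)$, finishing the proof.

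The main obstacle I anticipate is a mild regularity issue rather than a conceptual one: Markus-Yamabe is classically stated for $C^1$ vector fields, while Theorem \ref{main2} is phrased for differentiable $F$. I would either invoke a version of Markus-Yamabe that covers merely differentiable vector fields, or remark that in the Jacobian-conjecture context underlying this paper the word ``differentiable'' is used synonymously with $C^1$ (consistent with the way Theorems \ref{T1}--\ref{T3} are used earlier). I briefly considered the alternative strategy of applying Theorem \ref{main1} directly to $\operatorname{id}-F$ in order to show it is injective, but verifying the $W$-condition for that map requires ruling out eigenvalues of $DF$ approaching $1$ along sequences escaping to infinity, which does not follow from the assumption $Spec(F)\subseteq\{|z|<1\}$; Markus-Yamabe is the cleaner route.
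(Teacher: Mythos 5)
Your argument is correct and reaches the stated conclusion, but it is not the paper's route, and the difference is instructive. Both proofs begin the same way: pass to $G=F-\mathrm{id}$ (you translate by a fixed point $p$ first, which changes nothing) and observe that $Spec(G)\subseteq\{z\in\mathbb{C}:|z+1|<1\}$, a disk contained in the open left half-plane. The paper then finishes in one line by invoking its injectivity machinery: since $Spec(G)$ avoids $[0,+\infty)$ entirely, $G$ is injective (the paper cites Theorem \ref{main1}; the cleanest reference is really Theorem \ref{T1}, whose hypothesis $Spec\cap[0,\varepsilon)=\emptyset$ is satisfied verbatim), hence $G$ has at most one zero and $F$ at most one fixed point. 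You instead invoke the planar Markus--Yamabe theorem to get global asymptotic stability of the origin for $\dot x=G(x)$ and deduce uniqueness of the equilibrium. That works --- and your caveat about regularity is genuinely resolved by \cite{FGR04a}, which proves the differentiable version of global asymptotic stability --- but it imports a strictly stronger dynamical theorem whose two-dimensional proof itself factors through the injectivity statement you are trying to avoid, so it is heavier than necessary. The one substantive slip is your reason for rejecting the injectivity route: you worried about verifying the $W$-condition for $\mathrm{id}-F$, and indeed that condition is not literally automatic (real eigenvalues of $DG$ lie in $(-2,0)$ and may tend to $0$ from below), but you do not need it --- the hypothesis $Spec(G)\cap[0,\varepsilon)=\emptyset$ is exactly the hypothesis of Theorem \ref{T1}, which the paper has already derived from Theorem \ref{main1} by the standard reduction. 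So the injectivity route is not blocked; it is precisely what the paper does, and your instinct merely detected that the paper's citation of Theorem \ref{main1} rather than Theorem \ref{T1} is imprecise.
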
 

Another important property on the Keller maps as in corollary \ref{c1} 
is theroem $B$ in \cite{CGM99}.
It proves that a global attractor for the discrete dynamical system has
a unique fixed point.

By the Inverse Function Theorem, the $F$ in  Theorem \ref{main1} is locally 
injective at any point in  $\mathbb{R}^2$. However, in general, it is not
global injective map. So the goal is to give the sufficient conditions in order
to get the global injectivity of $F$. We use the $W$ condition to get the following
theorem.

\begin{theorem}\label{main3}
	Let $F=(f,g):\mathbb{R}^2 \to \mathbb{R}^2$ be a local homeomorphism such that for some $s>0,~~F|_{\mathbb{R}^2\backslash D_s}$ is differentiable. If $ F $ satisfies the $W$-condition, then it is a globally injective and $F(\mathbb{R}^2)$ is a convex set.
\end{theorem}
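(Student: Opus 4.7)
My plan is to follow the proof of Theorem \ref{main1} and exploit the fact that the half-Reeb component technique is inherently asymptotic: it only sees the behavior of $DF$ outside arbitrarily large compact sets. Since the non-differentiability in Theorem \ref{main3} is confined to the bounded disk $D_s$, the presence of $D_s$ should be invisible to the argument.

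First I would record that $F$, being a local homeomorphism on $\mathbb{R}^2$, is open and locally injective everywhere; moreover, by the Inverse Function Theorem, $\det DF(z)\neq 0$ for every $z\in\mathbb{R}^2\setminus D_s$, so the real eigenvalues of $DF$ are well defined on the complement of this bounded set, which is exactly where the $W$-condition constrains them. I would then argue by contradiction. Suppose $F$ is not globally injective or $F(\mathbb{R}^2)$ is not convex. Then, exactly as in the proof of Theorem \ref{main1} (and ultimately in the arguments of Fernandes--Guti\'errez--Rabanal and Guti\'errez--Nguyen), for some $\theta\in\mathbb{R}$ the map $F_\theta$ admits a half-Reeb component $\mathcal{A}$ of the foliation by level sets of $f_\theta$ (or of $g_\theta$). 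This construction is purely topological and depends only on $F$ being a local homeomorphism, so it carries over unchanged to the present weaker differentiability hypothesis.

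Next, to derive the contradiction with the $W$-condition one integrates, along a leaf of the foliation inside $\mathcal{A}$ that escapes to infinity with $x\to+\infty$, an expression of the form $\int 1/P(x)\,dx$, and concludes a finite upper bound from the eigenvalue control on $DF_\theta$. Since $D_s$ is bounded, I would intersect $\mathcal{A}$ with $\mathbb{R}^2\setminus \overline{D_{s_1}}$ for some $s_1>s$ large enough that the compact transverse edge of the half-Reeb component can be pushed beyond $\overline{D_{s_1}}$; on this intersection $F$ is differentiable, the chosen leaf still escapes to infinity, and the integral estimate from Theorem \ref{main1} applies verbatim with the integration range shifted to $[s_1,+\infty)$. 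Since $P\in\mathcal{P}$ forces $\int_{s_1}^{+\infty}1/P(x)\,dx=+\infty$, the divergence survives the truncation and contradicts the finite bound.

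\textbf{Main obstacle.} The delicate point is to verify that the half-Reeb component $\mathcal{A}$ can indeed be truncated by excising $\overline{D_{s_1}}$ in a way that preserves the geometric data required by the integral argument: a compact transverse edge plus two unbounded leaves along which $x\to+\infty$. Because a half-Reeb component is by construction unbounded and its leaves escape to infinity, one can always shift its compact edge arbitrarily far out; the care needed is to coordinate this shift with the chosen rotation $R_\theta$ so that the $W$-condition still controls $DF_\theta$ along the truncated leaf. I expect this to amount to a careful revisit of the estimates in \cite{GN07} and \cite{RR10}, but no essentially new difficulty, since the discarded bounded piece contributes only a finite error that is dominated by the divergent tail integral.
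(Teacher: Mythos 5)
Your proposal follows essentially the same route as the paper: reduce non-injectivity to a half-Reeb component whose projection $\Pi(\mathcal{A})$ is an unbounded interval (after a rotation, via Proposition \ref{2.2}), and then note that the integral estimate $\int_{a_0}^{C}\frac{1}{P(x)}\,dx$ from Proposition \ref{3.1} only uses the points $\big(x,H(x)\big)$ with $x\geqslant a_0$, which automatically lie outside $D_s$ once $a_0>s$, so the non-differentiability on the bounded disk is harmless. The paper packages exactly this observation as Proposition \ref{3.2}, and your truncation of the half-Reeb component beyond $\overline{D_{s_1}}$ is the same idea in different clothing.
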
 

\begin{remark}\label{re4}
If the graph of $F$ is an algebraic set , then the injectivity of $F$	must
be the bijectivity of $F$.	
\end{remark}

$W$ condition can  be also devoted to study the differentiable map
$F:{\mathbb{R}^2\backslash D_s} \to \mathbb{R}^2$ whose the $Spec(F)$ is disjoint
with $[0,+ \infty )$.

\begin{theorem}\label{main4}
	Let $F=(f,g):\mathbb{R}^2 \backslash \overline{D_\sigma} \to \mathbb{R}^2$ be a  differential map which satisfies the $W$-condition. If $Spec(F)\cap[0,+ \infty )=\emptyset$ or $Spec(F)\cap(- \infty,0]=\emptyset$, then there exists $s\geqslant \sigma$ such that $F|_{\mathbb{R}^2\backslash D_s}$
	can be extended to an injective local homeomorphism $\widetilde{F}=(\widetilde{f},\widetilde{g}):\mathbb{R}^2\to \mathbb{R}^2$.
\end{theorem}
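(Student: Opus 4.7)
The plan is to first produce a radius $s\geq \sigma$ outside of which $F$ is injective, by adapting the half-Reeb component argument of Theorems \ref{main1} and \ref{main3} to the punctured domain, and then to close up the resulting hole by a Schoenflies-type extension. Without loss of generality assume $Spec(F)\cap[0,+\infty)=\emptyset$; the other case is symmetric. In either case $0\notin Spec(F)$, so $F$ is a $C^1$ local homeomorphism on $\mathbb{R}^2\setminus\overline{D_\sigma}$, and $\det DF>0$ throughout the domain: a pair of real eigenvalues of $DF$ is then a pair of negative numbers and so has positive product, while a complex pair $\lambda,\bar\lambda$ has product $|\lambda|^2>0$.

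For the first step I would transplant the half-Reeb component analysis used in the proof of Theorem \ref{main1} to $F_\theta=R_\theta\circ F\circ R_{-\theta}$, which, since $R_\theta$ preserves every disk centred at the origin, is again defined on $\mathbb{R}^2\setminus\overline{D_\sigma}$. The $W$-condition combined with the absence of non-negative real eigenvalues rules out any half-Reeb component of $F_\theta$ that escapes to infinity in the $x$-direction. The only remaining candidates are half-Reeb components whose closures meet $\overline{D_\sigma}$, and these are confined to a bounded neighbourhood of the removed disk. A compactness argument in the parameter $\theta\in[0,2\pi]$ produces a single $s\geq\sigma$ such that, for every $\theta$, no half-Reeb component of $F_\theta$ intersects $\mathbb{R}^2\setminus D_s$. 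The standard consequences developed in the proof of Theorem \ref{main1} then yield that $F|_{\mathbb{R}^2\setminus D_s}$ is an injective local homeomorphism onto an open set whose complement is a bounded, simply connected region $U\subset\mathbb{R}^2$.

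For the extension, set $\gamma:=F(\partial D_s)$. Injectivity and continuity make $\gamma$ a Jordan curve, and the description of the image above forces $\gamma=\partial U$. The Jordan--Schoenflies theorem furnishes an orientation-preserving homeomorphism $\varphi\colon\overline{D_s}\to\overline{U}$ extending $F|_{\partial D_s}$. Setting $\widetilde F=F$ on $\mathbb{R}^2\setminus D_s$ and $\widetilde F=\varphi$ on $\overline{D_s}$ gives a continuous map which is globally injective (the pieces overlap only along $\gamma$, where they agree) and is manifestly a local homeomorphism at every point outside $\partial D_s$.

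The main obstacle will be verifying the local homeomorphism property on $\partial D_s$ itself. The danger is that $F$ might approach $\gamma$ from the wrong side, preventing a neighbourhood of a boundary point from embedding into the plane. This is exactly where the sign of $\det DF$ is decisive: orientation preservation together with injectivity on $\mathbb{R}^2\setminus D_s$ implies that $F|_{\partial D_r}$ has winding number $+1$ about any point of $U$ for every $r\geq s$, and hence that an outward collar of $\partial D_s$ is carried into $\mathbb{R}^2\setminus\overline{U}$. Since the orientation-preserving Schoenflies map $\varphi$ carries an inward collar of $\partial D_s$ into $U$, the two collars fit together along $\gamma$ to cover a full plane neighbourhood of each $F(p)\in\gamma$, and invariance of domain certifies that $\widetilde F$ is a local homeomorphism on $\partial D_s$, completing the construction.
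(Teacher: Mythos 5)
The paper's own ``proof'' of Theorem \ref{main4} is a single sentence deferring to ``similar methods'' and Proposition \ref{3.2}, so your proposal is already far more explicit than anything in the text; the architecture you choose (use the $W$-condition to kill unbounded half-Reeb components, deduce injectivity outside a large disk, then fill the hole by a Schoenflies extension) is indeed the route taken in the sources this statement is modelled on (\cite{GS03}, \cite{GR06}). Two of your steps, however, have genuine gaps. First, the passage from ``no unbounded half-Reeb component'' to ``$F$ is injective on $\mathbb{R}^2\setminus D_s$ for some $s\geqslant\sigma$'' is not covered by Propositions \ref{2.1}--\ref{2.2}, which are stated for maps defined on all of $\mathbb{R}^2$; for a map defined only off a disk one needs the punctured-plane analogue (non-injectivity on every neighbourhood of infinity produces, after a suitable rotation, a half-Reeb component with unbounded $\Pi$-projection), and your ``compactness argument in $\theta$'' does not supply that statement.

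Second, and more seriously, the key claim in your extension step is false as stated: orientation preservation together with injectivity on $\mathbb{R}^2\setminus D_s$ does \emph{not} force $F|_{\partial D_r}$ to have winding number $+1$ about the points of $U$. The map $F(z)=1/z$ on $\{|z|\geqslant 1\}$ is an orientation-preserving ($\det DF=|z|^{-4}>0$) injective local homeomorphism whose boundary circle winds with degree $-1$ about $0$; it carries an outward collar of $\partial D_1$ \emph{into} $U=D_1$, and it admits no injective local-homeomorphic extension to the plane. An orientation-preserving embedding of an annulus may swap the two ends, so the sign of $\det DF$ alone cannot rule out the bad configuration. This is precisely the point where the hypothesis $Spec(F)\cap[0,+\infty)=\emptyset$ (or its mirror) together with the $W$-condition must be invoked to show that the end at infinity of the domain is sent to the end at infinity of the image --- in \cite{GR06} this is the substantive part of the argument --- and your proposal does not use them there. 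Until that is repaired, the Schoenflies gluing and the invariance-of-domain step at $\partial D_s$ are not justified.
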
 

These works are related to the Jacobian conjecture which can be reduce to that for all dimension $n\geqslant 2$,
a polynomial map $F: \mathbb{C}^n \to  \mathbb{C}^n$ of the form
$F=x+H$,~where $H$ is cube-homogeneous and $JH$ is symmetry, is injective
if $Spec (F)=\{1\}$. (see \cite{BV05}).
\vskip0.09in

 In order to prove our theorems, we need to use
 the definition and propositions of the half-Reeb component.

\section{Half-Reeb component}

In this section, we will introduce some preparation on the eigenvalue conditions of $Spec (F)$. 

Let $h_0(x,y)=xy$ and consider the set 
$$B=\{(x,y)\in [0,2]\times [0,2]\big|0<x+y\leqslant 2\}.$$

\begin{definition}(half-Reeb component\cite{G95})\label{hRc}
	Let $X$ be a  differentiable map from $\mathbb{R}^2\to \mathbb{R}^2 $.
	 $\det DX_p\neq ,\forall p\in \mathbb{R}^2$, Given  $h\in\{f,g\}$, we will say that $\mathcal{A}\subseteq \mathbb{R}^2$ is a half-Reeb component for $\mathcal{F}(h)$ \big(or simply a hRc for $\mathcal{F}(h)$\big)if there exists a homeomorphism
	 $H : B\to \mathcal{A}$ which is a topological equivalence between  $\mathcal{F}(h)|_\mathcal{A}$ and $\mathcal{F}(h_0)|_B$ and such that:
	 
	  (1) The segment$ \{(x,y)\in B:x+y=2\}$ is sent by
	  $ H $ onto a transversal section for the foliation $\mathcal{F}(h) $ in the complement of $H(1,1)$; this section is called the compact edge of $\mathcal{A}$;
	  
	  (2) Both segments $\{(x,y)\in B:x=0\}$ and $\{(x,y)\in B:y=0\}$ are sent by $H$ onto full half-trajectories of $\mathcal{F}(h)$. These two semi-trajectories of $\mathcal{F}(h)$ are called the noncompact edges of $\mathcal{A}$.
  	
\end{definition}

\begin{proposition}\cite{FGR04a}\label{2.1}
 Suppose that $X=(f,g): \mathbb{R}^2\in \mathbb{R}^2$ is a differentiable map such that $0\notin Spec(X)$. If $X$ is not injective, then both $\mathcal{F}(f)$ and $\mathcal{F}(g)$ have half-Reeb components.
\end{proposition}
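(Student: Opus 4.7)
The plan is to convert the failure of injectivity into a Jordan curve in $\mathbb{R}^2$ built from one transverse arc and one leaf-arc of $\mathcal{F}(f)$, and then to verify that the disk it encloses is the image of a model half-Reeb component via the homeomorphism $H : B \to \mathcal{A}$ required by Definition \ref{hRc}. The argument for $\mathcal{F}(g)$ is obtained by replacing $f$ with $g$ throughout.

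First I would fix the foliation picture. Since $0 \notin \mathrm{Spec}(X)$ one has $\det DX \neq 0$ everywhere, so $\nabla f$ and $\nabla g$ are linearly independent and nowhere-vanishing. Consequently $f$ and $g$ are submersions and $\mathcal{F}(f),\mathcal{F}(g)$ are regular $C^1$ foliations of $\mathbb{R}^2$. A short argument excludes compact leaves: a closed leaf of $\mathcal{F}(f)$ would bound a disk on which $f$ would be forced to attain an interior extremum, contradicting $\nabla f \neq 0$. Hence every leaf is a properly embedded copy of $\mathbb{R}$.

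Next I would exploit non-injectivity locally. Pick $p \neq q$ with $X(p) = X(q) = r$; by the Inverse Function Theorem choose disjoint disks $U_p \ni p$ and $U_q \ni q$ that $X$ maps diffeomorphically onto a common disk $V$ around $r$. In $V$ choose a short segment $\sigma$ through $r$ on which $f \circ X^{-1}$ is strictly monotone, and pull it back to transverse arcs $\sigma_p \subset U_p$, $\sigma_q \subset U_q$. Both arcs are transverse to $\mathcal{F}(f)$ and realise the same interval of $f$-values, so the level-matching bijection $h : \sigma_p \to \sigma_q$ is a well-defined local homeomorphism sending $p$ to $q$.

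The main step, and the principal obstacle, is a global continuation: for $x \in \sigma_p$ close to $p$ I want to follow the leaf $L_x$ of $\mathcal{F}(f)$ and show it actually reaches $h(x) \in \sigma_q$ on one side. A priori $L_x$ and $L_{h(x)}$ could be distinct connected components of $f^{-1}(f(x))$, and since leaves are non-compact they might escape to infinity without meeting $\sigma_q$. I would handle this by combining a Poincar\'{e}--Bendixson style analysis in $\mathbb{R}^2$ (a leaf cannot re-cross a compact transversal once it has left it in a fixed direction) with a clopen argument on $\sigma_p$: the subset of points near $p$ whose forward leaf-orbit meets $\sigma_q$ is shown to be non-empty, open, and closed in a suitable subinterval. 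Once such an $x$ is selected, concatenating the leaf-arc from $x$ to $h(x)$ with the sub-arc of $\sigma_p \cup \sigma_q$ from $h(x)$ back through $p,q$ to $x$ gives a simple closed curve enclosing a topological disk $\mathcal{A}$. Inside $\mathcal{A}$ the foliation $\mathcal{F}(f)$ has $\sigma_p \cup \sigma_q$ as compact transverse edge and the two half-leaves extending $L_x$ beyond $x$ and $h(x)$ as the non-compact edges; this is precisely the foliated model $(B,\mathcal{F}(h_0))$ of Definition \ref{hRc}, and the required homeomorphism $H : B \to \mathcal{A}$ is then produced by the standard flow-box / holonomy normalisation. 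Running the same construction with $g$ in place of $f$ produces a half-Reeb component for $\mathcal{F}(g)$, completing the proof.
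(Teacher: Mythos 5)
The paper itself offers no proof of this proposition: it is quoted verbatim from \cite{FGR04a}, where the argument runs in the contrapositive direction --- one proves that if $\mathcal{F}(f)$ has \emph{no} half-Reeb component then its leaf space is Hausdorff, hence the foliation is topologically a product, and since $\det DX\neq 0$ forces $g$ to be strictly monotone along each leaf of $\mathcal{F}(f)$, the map $X$ is then injective. Your proposal instead attempts a direct construction from a pair $p\neq q$ with $X(p)=X(q)$, and the construction breaks at exactly the step you flag as ``the principal obstacle''. The set of $x\in\sigma_p$ whose leaf reaches $\sigma_q$ can be empty, so the clopen argument has no non-empty starting point. Take $X(x,y)=(e^{x}\cos y,\;e^{x}\sin y)$: here $DX(x,y)=e^{x}R_{y}$ with $R_\theta$ as in (\ref{eR}), so the eigenvalues are $e^{x\pm iy}\neq 0$ and $0\notin Spec(X)$, while $X(0,0)=X(0,2\pi)$ shows non-injectivity. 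Every leaf of $\mathcal{F}(f)$ through a short transversal at $p=(0,0)$ lies in the strip $\{|y|<\pi/2\}$ (it is a connected component of $\{e^{x}\cos y=c\}$ with $c>0$), so no leaf ever reaches a transversal at $q=(0,2\pi)$; yet $\mathcal{F}(f)$ does have a half-Reeb component, e.g.\ the half-strip $\{x\leqslant 0,\ |y|\leqslant \pi/2\}$ with compact edge $\{0\}\times[-\pi/2,\pi/2]$. The half-Reeb component is produced by the non-Hausdorff behaviour of the leaf space that non-injectivity forces, not by a leaf joining the two preimages, and your argument never touches that mechanism. (Even where the set is non-empty it is open but generally not closed; the interesting leaf is precisely a boundary point of it.)

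Two further defects would remain even if the continuation step were granted. First, $\sigma_p$ and $\sigma_q$ lie in the disjoint disks $U_p$ and $U_q$, so ``the sub-arc of $\sigma_p\cup\sigma_q$ from $h(x)$ back through $p,q$ to $x$'' is not a connected arc and your ``simple closed curve'' is not actually a curve; closing it up requires an additional arc joining the two disks, whose interaction with the foliation is uncontrolled. Second, the region enclosed by a Jordan curve is compact, whereas by Definition \ref{hRc} a half-Reeb component has two non-compact edges that are \emph{full} half-trajectories; since the leaves here are properly embedded non-compact curves, such edges are unbounded and a half-Reeb component can never be contained in a compact disk. So the object your construction would deliver is not a half-Reeb component in the sense required. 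A correct proof should either follow the leaf-space/trivialisation route of \cite{FGR04a} and \cite{G95}, or locate the half-Reeb component at a leaf where the product structure over a transversal first fails.
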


  \begin{proposition}\cite{FGR04a}\label{2.2}
   Let $X=(f,g): \mathbb{R}^2\in \mathbb{R}^2$ be a non-injective, differentiable map such that $0\notin Spec(X)$: Let $\mathcal{A}$ be a hRc of $\mathcal{F}(f)$ and let $(f_\theta,g_\theta )=R_\theta \circ X \circ R_{-\theta}$, where $\theta \in \mathbb{R}$ and $R_\theta$ is in (\ref{eR}). If $ \Pi(x,y)=x $ is bounded, where $  \Pi :\mathbb{R}^2\to \mathbb{R} $ is given by $ \Pi(x,y)=x $, then there is an $ \varepsilon >0 $ such that, for all $ \theta \in (-\varepsilon,0)\cup (0, \varepsilon) $; $ \mathcal{F}(f_\theta) $ has a hRc $\mathcal{A}_\theta $ such that $ \Pi(\mathcal{A}_\theta )$is an interval of infinite length.
  \end{proposition}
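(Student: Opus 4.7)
The plan is to exploit the geometric rigidity of the half-Reeb component $\mathcal{A}$ under a small rotation of $X$, using the fact that a bounded $x$-projection forces $\mathcal{A}$ to be confined to a vertical strip but extending to infinity in the $y$-direction. First, I would fix $M>0$ with $\mathcal{A}\subseteq [-M,M]\times\mathbb{R}$. Since the two noncompact edges of $\mathcal{A}$ are full half-trajectories of $\mathcal{F}(f)$, they cannot be bounded, so they must escape to $\pm\infty$ in the $y$-direction. Combined with the homeomorphism $H\colon B\to\mathcal{A}$ of Definition \ref{hRc}, this gives a precise picture: $\mathcal{A}$ is a ``vertical strip-like'' topological triangle whose compact edge $\gamma$ is a transversal arc of $\mathcal{F}(f)$ and whose interior is foliated by arcs joining the two noncompact vertical edges.

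Second, I would study the perturbed foliation $\mathcal{F}(f_\theta)$. A short computation shows the level sets of $f_\theta$ are the rotations by $R_\theta$ of level sets of $\cos\theta\, f-\sin\theta\, g$. Because $0\notin \mathrm{Spec}(X)$, the function $\cos\theta\, f-\sin\theta\, g$ has no singular points for $|\theta|$ small, so $\mathcal{F}(f_\theta)$ is a nonsingular $C^1$ foliation that varies continuously with $\theta$ on compact sets. In particular, the compact transversal $\gamma$ of $\mathcal{A}$ remains, after a small adjustment, a transversal $\gamma_\theta$ to $\mathcal{F}(f_\theta)$ for $|\theta|<\varepsilon$.

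Third, I would construct $\mathcal{A}_\theta$ by saturating $\gamma_\theta$ by trajectories of $\mathcal{F}(f_\theta)$. By the persistence of the unbounded edges under small perturbation (both endpoints of $\gamma_\theta$ follow leaves that remain close to the original noncompact edges for a long time and must continue to infinity because the foliation has no singularities and $\mathcal{F}(f_\theta)$ inherits the relevant combinatorial structure), one obtains a region $\mathcal{A}_\theta$ homeomorphic to $B$ in the required way, verifying the hRc axioms (1)--(2). Finally, since the old noncompact edges went to $y=\pm\infty$ with $x$ bounded, after the intrinsic rotation by $R_\theta$ with $\theta\ne 0$ these edges are tilted, so along them $x$ tends to $+\infty$ or $-\infty$; hence $\Pi(\mathcal{A}_\theta)$ is an unbounded interval.

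The main obstacle will be the persistence argument in step three: one must show rigorously that the trajectories of $\mathcal{F}(f_\theta)$ emanating from $\gamma_\theta$ do not wander off and create a different topological configuration (e.g., a closed leaf or a second transversality with $\gamma_\theta$). This requires using the absence of singularities of $f_\theta$ together with the Poincar\'e--Bendixson-type control that hRc's provide, and the uniform transversality of $\gamma_\theta$ on compact pieces. Once this qualitative control is in place, the infinite-length conclusion follows directly from the geometric tilt argument in the last step.
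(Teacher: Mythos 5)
First, a point of comparison: the paper does not prove Proposition \ref{2.2} at all --- it is imported verbatim from \cite{FGR04a} and used as a black box, so there is no internal proof to measure your attempt against. Judged on its own, your sketch has a genuine gap at its load-bearing step.

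The gap is the final ``tilt'' argument, which conflates the rotated set $R_\theta(\mathcal{A})$ with a half-Reeb component of the new foliation $\mathcal{F}(f_\theta)$. As you correctly note, the leaves of $\mathcal{F}(f_\theta)$ are the $R_\theta$-images of the leaves of $\mathcal{F}(h_\theta)$, where $h_\theta=\cos\theta\, f-\sin\theta\, g$. Hence the noncompact edges of any hRc $\mathcal{A}_\theta$ of $\mathcal{F}(f_\theta)$ are rotated half-leaves of the \emph{perturbed} foliation $\mathcal{F}(h_\theta)$, not tilted copies of the old noncompact edges of $\mathcal{F}(f)$. The perturbation $-\sin\theta\, g$ is small only in the sense that $\theta$ is small; it is not uniformly small at infinity, so the continuity ``on compact sets'' established in your second step gives no control over the asymptotic direction of the new edges. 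A priori they need not remain near the old vertical edges, and $\Pi(\mathcal{A}_\theta)$ could still be bounded --- ruling this out is precisely the content of the proposition, and your sketch replaces it with an unproved assertion. Relatedly, the persistence step (that the $\mathcal{F}(h_\theta)$-saturation of $\gamma_\theta$ is again a topological triangle satisfying axioms (1)--(2) of Definition \ref{hRc}, with no return of a leaf to $\gamma_\theta$) is flagged by you as the main obstacle but no mechanism is supplied: Poincar\'e--Bendixson-type arguments are not directly available for a foliation without a distinguished flow, and although leaves of a nonsingular plane foliation are properly embedded lines, that alone does not exclude a leaf through an endpoint of $\gamma_\theta$ meeting $\gamma_\theta$ a second time. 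As it stands the proposal is a plausible picture rather than a proof; repairing it essentially requires reproducing the construction of \cite{FGR04a}, which builds $\mathcal{A}_\theta$ from the compact edge of $\mathcal{A}$ and explicitly tracks the leaves of $h_\theta$ to force the projection to become unbounded.
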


\vskip0.1in
\section{Half-Reeb component and $W$-condition}
 In this section, we will establish the essential fact that 
 the $W$-condition ensures non-existence of half-Reeb component.
 
 Let $F=(f,g):\mathbb{R}^2\to \mathbb{R}^2$ be a local homeomorphism of $\mathbb{R}^2$. For each $\theta\in\mathbb{R} $, we denoted by $R_\theta$
 the linear rotation $\big($see (\ref{eR}$)\big)$:
 \[ (x,y)\to (x\cos\theta-y\sin\theta,x\sin\theta+y\cos\theta),\]
 and
 \[ F_\theta:=(f_\theta,g_\theta) =R_\theta \circ F \circ R_{-\theta}.\]
 
 In other words, $F_\theta$ represents the linear rotation $R_\theta$ in the 
 linear coordinates of $\mathbb{R}^2.$ 
 
  \begin{proposition}\label{3.1}
A differentiable local homeomorphism $F: \mathbb{R}^2\to \mathbb{R}^2$ which satisfies $W$-condition has no half-Reeb components.
 \end{proposition}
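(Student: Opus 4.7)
I argue by contradiction. Assume $F$ satisfies the $W$-condition for some $P\in\mathcal{P}$ yet $\mathcal{F}(f)$ admits a half-Reeb component $\mathcal{A}$ (the case of $\mathcal{F}(g)$ is symmetric). My goal is to construct a sequence $(x_k,y_k)\in\mathbb{R}^2$ with $x_k\to+\infty$, $F_\theta(x_k,y_k)\to T\in\mathbb{R}^2$, and a real eigenvalue $\lambda_k$ of $DF_\theta(x_k,y_k)$ satisfying $\lambda_k P(x_k)\to 0$, directly contradicting Definition~\ref{W}.

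Since an arbitrary hRc need not have infinite horizontal projection, the first step is Proposition~\ref{2.2}: after replacing $F$ by $F_\theta$ for a small $\theta\neq 0$, I may assume $\mathcal{F}(f_\theta)$ has a hRc $\mathcal{A}_\theta$ with $\Pi(\mathcal{A}_\theta)$ an interval of infinite length. For $x\ge x_0$, each vertical slice $I_x=\mathcal{A}_\theta\cap(\{x\}\times\mathbb{R})$ is a nonempty interval bounded by the two non-compact edges of $\mathcal{A}_\theta$ (full half-trajectories of $\mathcal{F}(f_\theta)$). A standard compactness argument based on the hRc normal form---the image of every leaf of $\mathcal{F}(f_\theta)|_{\mathcal{A}_\theta}$ is a vertical segment trapped between the image curves of the two edges---lets me select test points $(x_k,y_k^{*})\in\mathcal{A}_\theta$ with $x_k\to+\infty$ and $F_\theta(x_k,y_k^{*})\to T\in\mathbb{R}^2$.

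The heart of the proof is an integral estimate on the leaves. On any leaf $\gamma\subset\mathcal{A}_\theta$ of $\mathcal{F}(f_\theta)$ parametrized by arclength, the tangent $\tau=(-\partial_y f_\theta,\partial_x f_\theta)/|\nabla f_\theta|$ satisfies $DF_\theta\cdot\tau=(0,\det DF_\theta/|\nabla f_\theta|)$, so $F_\theta(\gamma)$ is a vertical curve whose total variation in $y$ equals
\[
\int_{\gamma}\frac{|\det DF_\theta|}{|\nabla f_\theta|}\,ds\;\le\;C,
\]
with $C$ independent of $\gamma$. Changing variables from arclength to the $x$-coordinate along a suitable transverse arc and recognizing the integrand as controlling a real eigenvalue of $DF_\theta$ at the selected points, I obtain
\[
\int_{x_0}^{N}|\lambda(x)|\,dx\;\le\;C',\qquad\text{for all }N>x_0,
\]
where $\lambda(x)$ is a real eigenvalue of $DF_\theta(x,y^{*}(x))$. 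If no violating sequence existed, then $|\lambda(x)|P(x)\ge\delta>0$ eventually, whence $|\lambda(x)|\ge\delta/P(x)$ and
\[
C'\;\ge\;\delta\int_{x_0}^{N}\frac{dx}{P(x)}\to+\infty\quad(N\to\infty),
\]
by the defining divergence property of $\mathcal{P}$---a contradiction. Hence $\lambda(x_k)P(x_k)\to 0$ along some $x_k\to+\infty$, the desired violation.

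\textbf{Main obstacle.} The delicate point is the passage from the integral bound on the \emph{singular-value-type} quantity $|\det DF_\theta|/|\nabla f_\theta|$ to an honest bound on a \emph{real eigenvalue} of $DF_\theta$. This demands a careful geometric selection of points $(x,y^{*}(x))\in\mathcal{A}_\theta$ where, via the hRc normal form, the leaf tangent is nearly aligned with an eigen-direction of $DF_\theta$, so that the quotient above genuinely controls a real eigenvalue rather than a singular value. Once this identification is secured, the generalization from Rabanal's choice $P(x)=x$ to arbitrary $P\in\mathcal{P}$ is essentially free: it depends only on the defining divergence $\int_2^{\infty}dx/P(x)=\infty$, of which the integrability bound $\int|\lambda|\,dx\le C'$ is the precise dual.
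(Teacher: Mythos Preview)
Your overall architecture---contradiction, Proposition~\ref{2.2} to arrange $\Pi(\mathcal{A}_\theta)\supset[a,\infty)$, an integral bound $\int|\lambda(x)|\,dx\le C'$, and then the dichotomy against $\int dx/P(x)=\infty$---is exactly the paper's. But the mechanism you propose for obtaining that integral bound is both different and incomplete, and the ``main obstacle'' you flag is real for your route while it simply does not arise in the paper's.

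The paper does \emph{not} integrate along leaves. Instead it makes a pointwise geometric selection that forces the Jacobian to be triangular. For each $x\ge a$ there is a unique leaf $\alpha_x$ of $\mathcal{F}(f_\theta)$ in $\mathcal{A}_\theta$ whose rightmost abscissa is $x$; set $H(x)$ to be the ordinate of this rightmost point and $\Phi(x)=f_\theta\bigl(x,H(x)\bigr)$. At $(x,H(x))$ the leaf is tangent to the vertical line $\{x\}\times\mathbb{R}$, hence $\partial_y f_\theta\bigl(x,H(x)\bigr)=0$ and
\[
DF_\theta\bigl(x,H(x)\bigr)=\begin{pmatrix}\partial_x f_\theta & 0\\ \partial_x g_\theta & \partial_y g_\theta\end{pmatrix},
\]
so $\lambda(x)=\partial_x f_\theta\bigl(x,H(x)\bigr)=\Phi'(x)$ is an honest real eigenvalue, not an approximate one. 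Moreover $\Phi$ is bounded (its values lie in $f_\theta(\Gamma)$ for the compact edge $\Gamma$) and strictly monotone, so on the full-measure set where $\Phi'$ exists one has $\int_{a_0}^{N}|\lambda(x)|\,dx=\int_{a_0}^{N}\Phi'(x)\,dx\le\Phi(N)-\Phi(a_0)\le L$ for all $N$. From here your final paragraph runs verbatim.

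By contrast, your leaf-integral $\int_\gamma|\det DF_\theta|/|\nabla f_\theta|\,ds$ measures the $y$-variation of $F_\theta(\gamma)$, which is fine, but the subsequent ``change of variables to the $x$-coordinate along a suitable transverse arc'' is not justified: a leaf is not a transversal, and you give no construction of the curve $x\mapsto(x,y^{*}(x))$ nor any reason why the integrand there should equal an eigenvalue. Your own diagnosis (``nearly aligned with an eigen-direction'') is the symptom: you are trying to approximate what the paper obtains exactly by choosing the tangency points. Replace your leaf argument with the $H(x)$/$\Phi(x)$ construction and the gap closes.
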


\begin{proof} Suppose by contradiction that $F$ has a half-Reeb component. 
	In order to obtain this result, we consider the map $(f_\theta,g_\theta)=F_\theta$.
	From  Proposition \ref{2.2}, there exists some  $\theta\in\mathbb{R}$, such 
	that $\mathcal{F}(\mathcal{A}_\theta)$ has a half-Reeb component which $\Pi (\mathcal{A})$ is unbounded interval, where $\Pi (\mathcal{A})$ denote orthgonal projection onto the first coordinate in $\mathcal{A}$. Therefore $\exists$ $b$ and a half-Reeb component 
$A$, such that $[b,+\infty)  \subseteq \Pi (\mathcal{A}) $. Then, for large enough
$a>b$ and any $x\geqslant a$,  the vertical line $\Pi^{-1}(x)$ intersects exactly the  one trajectory
$\alpha_x \cap [x, +\infty) = x$, i.e. $x$ is maximum of the the trajectory
$\Pi_{\alpha_x}$. If $x\geqslant a$, the intersection $\alpha_x \cap \Pi^{-1}(x)$  is compact
subset in $\mathcal{A}$. 

Thus, we can define functions
$H: (a, +\infty) \to \mathbb{R}$ by 
\[ H(x)=\sup \{y: (x,y) \in \Pi^{-1}(x)\cap \alpha_x\}.\]

As $\mathcal{F}(f_\theta)$ is a foliation and can obtain

\[\Phi: (a, +\infty) ~~\mbox{ by}~~\Phi(x)=f_\theta \big(x,H(x)\big).\]

We can know that $\Phi$ is a bounded, strictly monotone function such that, for some full measure subset $M \subseteq  (a,+\infty)$.

Since the image of $\Phi$ is contained in $f_\theta(\Gamma)$ where $\Gamma$
is compact edge of hRc $\mathcal{A}$, the function $\Phi$ is bounded in $(a,+\infty)$. Furthermore, $\Phi$ is continuous  because $\mathcal{F}(f_\theta)$ is a $C^0$ foliation. And
since $\mathcal{F}(f_\theta)$ is transversal to $\Gamma$, $\Phi$ is monotone strictly. 

For the measure subset $M\subseteq (a,+\infty) $, such that $\Phi(x)$ is 
differentiable on $M$ and the Jacobian matrix of $F_\theta(x,y)$  at
$\big(x, H(x)\big)$ is 

\[ DF_\theta(x,H(x))=
\begin{pmatrix}
\Phi'(x)  & 0 \\
\partial_xg_\theta\big(x,H(x)\big) & \partial_yg_\theta\big(x,H(x)\big)\end{pmatrix}. \]

Therefore, $\forall x\in M$, 
$\Phi'(x)=\partial_x f_\theta \big(x,H(x)\big)$ is a real eigenvalue of $DF_\theta(x,H(x))$ and we denote it by $\lambda(x):=\Phi'(x)$.

Since $F$ is  local homeomorphism,
without loss of generality, we assume $\Phi$ is strictly monotone increasing, $\Phi'(x)>0, \forall x\in M$.  Let any  function $P\in \mathcal{P}$, where

 $\mathcal{P}=\Big\{P~\big|~\mathbb{R}^+\to \mathbb{R}^+, P ~\mbox{is nondecreasing  and} 
~\forall M>0, \mbox{there exists large constant } $
$N$ which depends on $M$ and $P$, such that
$\displaystyle\int_{2}^{N}\frac{1}{P(x)}dx> M\Big\}.$

Claim:  \[\liminf_{x_k\rightarrow +\infty} \Phi'(x_k)P(x_k)>0.\]

Because $P(x)$ and $\Phi'(x)$  are  both positive, we can 
suppose by contradition that

\noindent $\liminf_{x_k\rightarrow +\infty} \Phi'(x_k)P(x_k)=0.$ 
There exists a subsequence denoted still $\{ x_k\} $ with $x_k \to +\infty$ such that $ \Phi'(x_k)P(x_k) \to 0.$
That is  $ \lambda(x_k)P(x_k) \to 0.$ Since $F_\theta(\mathcal{A})$ is bounded, $F_\theta\big(x_k, H(x_k)\big)$ converges to a finite value $T$ on compact set $\overline{\mathcal{F}_\theta (\mathcal{A})} $.
This contradicts  the $W$-condition.

Therefore, there exist constant  $a_0~(a_0>2) $
and small $ \varepsilon_0 >0 $, such that 
\[\Phi'(x)P(x)>\varepsilon_0,~\forall x \geqslant a_0. \]

Since $ \Phi(x) $ is bounded, there exists $ L>0 $, such that
\[ \Phi(x)-\Phi(a_0)\leqslant L, ~~\forall x\geqslant a_0.\]

 By the definiton of  $\mathcal{P}$, we can choose $C$ large enough, such that 
\[ \int_{a_0}^{C}\frac{1}{P(x)}dx >\frac{L}{\varepsilon_0}. \]

Thus,
\[L\geqslant \Phi(C)-\Phi(a_0)=\int_{a_0}^{C}\Phi'(x)dx\geqslant \int_{a_0}^{C}\frac{\varepsilon_0}
{P(x)}dx>L. \]

It is contradiction.

\end{proof}

\section{The Proof of Theorem \ref{main1}}
\begin{proof}
Suppose by contradiction that $F$ is not injective. By Proposition \ref{2.1}, $F$ has a half-Reeb component, this 
contradicts Proposition \ref{3.1} that $F$ has no half-Reeb component
if $F$ satisfies the $W$-condition. 
\end{proof}

\section{The Proof of Theorem \ref{main2}}
\begin{proof}
Firstly, we prove the equivalence of the differential Jacobian map and measure-preserving in any dimension $n$.
 
 For any nonempty measurable set $\Omega\subset \mathbb{R}^n.$
Since $F:\mathbb{R}^n \to \mathbb{R}^n,$\\
denote~$V = \{ F(x)\left| {x \in } \right.\Omega \}$. 
Let the components of $F(x)$ be $v_i (i=1,2...n)$, i.e.
$ F({x_1},...{x_n}) = ({v_1}({x_1,...x_n}),...{v_n}({x_1...x_n})).$
So $dv = \det F'(x)dx.$
Since $\det F'(x) \equiv 1$, we get
$dv=dx.$

Therefore, $\int_V {dv}  = \int_\Omega  {dx}.$ It implies $F$ preserves measure.

Inversely, let $v = F(x),~\forall x \in \Omega.$
We still denote
$V = \{ F(x)\left| {x \in } \right.\Omega \}.$

Since $F$ preserves measure, one gets
$\int_V {dv}  = \int_\Omega  {dx}.$

Combining with
$dv = \det F'(x)dx,$
we obtain
$\int_V {dv}  = \int_\Omega  {\det F'(x)dx}.$

Thus, we have
$\int_\Omega  {dx}  = \int_\Omega  \det F'(x)dx.$
That is
\[\displaystyle \int_\Omega  {\big(1 - \det F'(x)\big)dx}  = 0,~\forall ~\Omega  \subset {\mathbb{R}^n}.\]
Claim: $\det F'(x) \equiv {\rm{1}},~\forall x \in {\mathbb{R}^n}.$
It's proof by contradiction. Suppose $\exists ~{x_0} \in \mathbb{R}^n,\det F'(x_0) \ne {\rm{1}}.$
Without loss of generality, we suppose
$\det F'({x_0})>1,$ denote $C=\det F'({x_0}) - 1>0 $.
Since $F \in {C^1}$,  $\det F'(x)\in C$.
$\exists~ \delta>0$, such that
$\det F'(x) - 1 \geqslant  \frac{C}{2},$
$\forall x \in U\left( {{x_0},\delta } \right)$.

Choosing $\Omega  = U\left( {{x_0},\delta } \right)$, thus
\[\int_{U\left( {{x_0},\delta } \right)} {(1 - \det F'(x))dx}  \le \int_{U\left( {{x_0},\delta } \right)} { - \frac{C}{2}dx}  =  - \frac{C}{2}m(U\left( {{x_0},\delta } \right))<0,\]
it contradicts.

	Thus, we obtain the global injectiveity of $F$ by the Theorem \ref{main1}.
	Forthermore, the image of $F$ is convex.
\end{proof}

  Before we give the proof of Theorem \ref{main3}, we need the following
  proposition.
   \begin{proposition}\label{3.2}
   	Let $F=(f,g):\mathbb{R}^2 \to \mathbb{R}^2$ be a local homeomorphism such that for some $s>0,~~F|_{\mathbb{R}^2\backslash D_s}$ . If $F$ satisfies the $W$ condition,
   	then\\
   	(1) any half Reeb component of $\mathcal{F}(f)$ or $\mathcal{F}(g)$ is a bounded
   	in $\mathbb{R}^2$;\\
   	(2) If $F$ extends to a local homemorphism $\widetilde {\overline F } = \left( {\overline f ,\overline g } \right):\mathbb{R}^2\to \mathbb{R}^2$ , $\mathcal{F}(\overline f)$ and $\mathcal{F}(\overline g)$ have no half-Reeb components.
   	  \end{proposition}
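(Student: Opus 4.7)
The plan is to adapt the argument of Proposition~\ref{3.1} to the weaker hypothesis that $F$ is differentiable only off the closed disk $D_s$, and, in part~(2), to extensions that need not be differentiable on $D_s$.

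For part~(1), I would proceed by contradiction. Assume that $\mathcal{F}(f)$ admits an unbounded half-Reeb component $\mathcal{A}$. Since every rotation $R_\theta$ preserves $D_s$, each conjugate $F_\theta$ remains differentiable on $\mathbb{R}^2\setminus D_s$. Applying Proposition~\ref{2.2} (after a small rotation if $\Pi(\mathcal{A})$ happens to be bounded), I obtain a hRc $\mathcal{A}_\theta$ with $\Pi(\mathcal{A}_\theta)\supseteq[b,+\infty)$. Choosing $a>\max(b,s)$ guarantees that the portion of $\mathcal{A}_\theta$ lying over $[a,+\infty)$ sits inside the differentiability region. I then set $H(x)=\sup\{y:(x,y)\in\Pi^{-1}(x)\cap\alpha_x\}$ and $\Phi(x)=f_\theta(x,H(x))$ exactly as in Proposition~\ref{3.1}; the same reasoning shows $\Phi$ is bounded (since $\Phi(x)\in f_\theta(\Gamma)$ for the compact edge $\Gamma$ of $\mathcal{A}_\theta$), strictly monotone, and differentiable almost everywhere with $\Phi'(x)$ a real eigenvalue of $DF_\theta(x,H(x))$. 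The $W$-condition then furnishes $\varepsilon_0>0$ and $a_0>a$ with $\Phi'(x)P(x)\geq\varepsilon_0$ for all $x\geq a_0$, and the comparison $\int_{a_0}^{N}\Phi'(x)\,dx\geq \varepsilon_0\int_{a_0}^{N}\frac{dx}{P(x)}$, whose right-hand side is unbounded in $N$ because $P\in\mathcal{P}$, contradicts the boundedness of $\Phi$. Hence every hRc of $\mathcal{F}(f)$ must be bounded; the case of $\mathcal{F}(g)$ is symmetric.

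For part~(2), I would again argue by contradiction, assuming that $\mathcal{F}(\overline{f})$ admits a hRc $\overline{\mathcal{A}}$. The key observation is that $\widetilde{\overline{F}}$ agrees with $F$ off $D_s$, so the foliations $\mathcal{F}(\overline{f})$ and $\mathcal{F}(f)$ coincide there and each conjugate $\widetilde{\overline{F}}_\theta$ is differentiable outside $D_s$. Applying Proposition~\ref{2.2} to $\widetilde{\overline{F}}$ produces a rotated hRc $\overline{\mathcal{A}}_\theta$ with $\Pi(\overline{\mathcal{A}}_\theta)$ an interval of infinite length. The portion of $\overline{\mathcal{A}}_\theta$ lying above $x>\max(b,s)$ is contained in the differentiability region, so the $H$-$\Phi$ construction from part~(1) applies verbatim to $\widetilde{\overline{F}}_\theta$. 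The $W$-condition, being a statement only about sequences escaping to infinity, then yields the same integral contradiction, ruling out any hRc of $\mathcal{F}(\overline{f})$; $\mathcal{F}(\overline{g})$ is handled identically.

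The main obstacle I anticipate is the bookkeeping around $D_s$: one must check that Proposition~\ref{2.2} (originally proved under full differentiability with $0\notin Spec(X)$) still delivers the unbounded-projection conclusion in this partial-differentiability setting, using that its proof is essentially local near the non-compact edges of the hRc and that those edges can be pushed outside $D_s$ after a small rotation. Once that local adjustment is made, the $\Phi$-integral argument from Proposition~\ref{3.1} transfers without modification, and part~(2) reduces to applying the same machinery to $\widetilde{\overline{F}}_\theta$ on the region $\mathbb{R}^2\setminus D_s$ where it coincides with $F_\theta$.
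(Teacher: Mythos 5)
Your proposal follows essentially the same route as the paper: both reduce the statement to the $H$--$\Phi$ construction and the integral contradiction of Proposition~\ref{3.1}, splitting into the case $\liminf \Phi'(x_k)P(x_k)=0$ (excluded by the $W$-condition) and the case $\liminf>0$ (excluded by the boundedness of $\Phi$ via $\int \frac{dx}{P(x)}$ diverging). Your version is in fact more careful than the paper's, which does not explicitly verify that the relevant part of the half-Reeb component can be pushed outside $D_s$ nor treat part~(2) separately, so no gap is introduced by your approach.
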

  \begin{proof}
  Without loss of generality, we consider the  $\mathcal{F}(f)$, by contradiction  have an unbounded half Reed component. By the process in Proposition \ref{3.1}, we assume 
  that $\mathcal{F}(f)$ has a half Reeb component $\mathcal{A}$  such that $\Pi (\mathcal{A})$ is unbounded interval. Furthermore,
  \[ DF(x,H(x))=
  \begin{pmatrix}
  \Phi'(x)  & 0 \\
  \partial_xg\big(x,H(x)\big) & \partial_yg\big(x,H(x)\big)\end{pmatrix}. \]
  
  If \noindent $\liminf_{x_k\rightarrow +\infty} \Phi'(x_k)P(x_k)=0.$ 
  There exists a subsequence denoted still $\{ x_k\} $ with $x_k \to +\infty$ such that $ \Phi'(x_k)P(x_k) \to 0.$
  That is  $ \lambda(x_k)P(x_k) \to 0.$ Since $F(\mathcal{A})$ is bounded, $F\big(x_k, H(x_k)\big)$ converges to a finite value $T$ on compact set $\overline{\mathcal{F}(\mathcal{A})} $.
  This contradicts  the $W$-condition.

If \noindent $\liminf_{x_k\rightarrow +\infty} \Phi'(x_k)P(x_k)\ne 0,$ then 
$\liminf_{x_k\rightarrow +\infty} \Phi'(x_k)P(x_k)> 0$. Thus, there exists
$C_0>C$ and $l>0$ such that $\Phi'(x)P(x)>l, \forall x>C_0$.
There exists $K>0$ such that, take $C>C_0$ \[\int_{C_0}^{C}\frac{l}{P(x)}dx >K. \]
And $\Phi(C)-\Phi(C_0)<K.$

Then
\[K<\int_{C_0}^{C}\frac{l}{P(x)}dx \le \int_{C_0}^{C}\Phi'(x)dx <K.\]
This contradiction proves the proposition.
\end{proof}

\section{The Proof of Theorem \ref{main3}}
\begin{proof}
	By Proposition \ref{3.2}, it's very easy to know the image of $F$ is convex.
	This implies that  $\mathcal{F}(f)$ has a half Reeb component.
	It contradicts the Proposition \ref{3.1}.
	Thus, we complete the proof.	
\end{proof}

\section{The Proof of Theorem \ref{main4}}
\begin{proof}
By similar methods, we can prove the Theorem \ref{main4}  by  half Reeb component
and Proposition \ref{3.2}.
\end{proof}

In finally, we prove the Corollary  $\ref{c1}$. 

${\bf The~~ Proof ~~of ~~Corollary~~ \ref{c1}}$: Consider
$G:\mathbb{R}^2\to \mathbb{R}^2$ and $G(z)=F(z)-1, \forall z\in  \mathbb{R}^2 $ .
$G(z)$ has no positive eigenvalue because $Spec(G) \subset \{ z \in {\mathbb{R}^2}:{\mathop{\rm Re}\nolimits} (z) < 0\} $. So $G$ is injective
by Theorem \ref{main1}. Thus, $F$ has a fixed point.

\begin{remark}\label{re5}
It is very important and meaningful to study the relation between half-Reeb component in higher dimensions and the rate of tending to zero of eigenvalues of $DF$. 
\end{remark}

\newpage


\begin{thebibliography}{99}
\bibitem[1]{BCW82}  H. Bass, E. Connell and D. Wright. \newblock{The Jacobian Conjecture: reduction of degree and formal expansion of the inverse.} 
\newblock{\it Bull. Amer. Math. Soc.}, {\bf 2}, 287-330, 1982.

\bibitem[2]{BV05}  M. de Bondt and A. van den Essen.
\newblock{ A Reduction of the Jacobian Conjecture to the Symmetric Case.}  \newblock{\it Proc. Amer. Math. Soc.},
{\bf 8}, 2201-2205, 2005.

\bibitem[3]{BJ14}  F. Braun and V.S. Jean.
\newblock{ Half-Reeb components, Palais-Smale condition and global injectivity of local diffeomorphisms in $\mathbb{R}^3.$}\newblock{\it    Proc. New Trends in Dyna. Sys.}, 63-79, 2014.


\bibitem[4]{CM98}
M.~Chamberland~and G.~Meisters,
\newblock A Mountain Pass to the Jacobian Conjecture,
\newblock {\it Canad. Math. Bull.}, {\bf 41}, 442-451, 1998.

\bibitem[5]{CGM99} A. Cima, A. Gasull and F. Manosas. The discrete Markus?Yamabe problem.
{\it Nonlinear Anal., Ser. A: Theory Methods},  {\bf 35},
 343?354, 1999.

\bibitem[6]{CEG97}
A. Cima, A. van den Essen, A. Gasull, E. Hubbers, and F. Manosas,
\newblock A Polynomial counterexample to the Markus-Yamabe Conjecture,
\newblock {\it Adv. Math.} {\bf 131}, 453-457, 1997.

\bibitem[7]{CR91} S. L. Cynk and K. Rusek. \newblock{Injective endomorphisms of algebraic and analytic sets.} \newblock{\it Ann. Polo. Math}. {\bf 1}, 56, 1991. 

\bibitem[8]{E00} A. van den Essen. \newblock {Polynomial Automorphisms and the Jacobian Conjecture.} 
\newblock{\it Prog. in Math.}, 190, 2000.

\bibitem[9]{FGR04a} A. Fernandes, C. Guti\'{e}rrez and R. Rabanalb. \newblock {Global asymptotic stability for differentiable vector fields of $\mathbb{R}^2$.} 
\newblock{\it J. Diff. Equat.,} {\bf 206}, 470-482, 2004.

\bibitem[10]{FGR04b} A. Fernandes, C. Guti\'{e}rrez and R. Rabanalb. \newblock {On Local Deffomorphisms of $\mathbb{R}^n$ that are Injective.} 
\newblock{\it Qual. Theory Dyn. Syst.}, {\bf 4},  255-262, 2004.

\bibitem[11]{F95} R. Fessler. \newblock {A proof of the two dimensional Markus-Yamabe Stability Conjecture and a generalization.} 
\newblock{\it Ann. Polon. Math.}, {\bf 62},  45-74, 1995.

\bibitem[12]{G95} C. Guti\'{e}rrez,
\newblock {A solution to the bidimensional Global
	Asymptotic Stability Conjecture.} 
\newblock{\it Ann. Inst. Henri Poincar$\acute{e}$}, {\bf 12},  627-671, 1995.

\bibitem[13]{GM09} C. Guti\'{e}rrez and C. Maquera. \newblock {Foliations and polynomial diffeomorphisms of $\mathbb{R}^3$.} 
\newblock{\it Math. Z.}, {\bf 262}, 613-626 , 2009.


\bibitem[14]{GPR06} C. Guti\'{e}rrez, B. Pires and R. Rabanalb. \newblock {Asymototic stability at infinity for differentiable vector fields of the plane.} 
\newblock{\it J. Diff. Equat.}, {\bf 231},  165-181, 2006.


\bibitem[15]{GN07} C. Guti\'{e}rrez and V. Ch. Nguyen. \newblock {A remark on an eigenvalue condition for the global injectivity of differentiable maps of $\mathbb{R}^2$.} 
\newblock{\it Disc. Conti. Dyna. Syst.}, {\bf 4},  255-262, 2007.


\bibitem[16]{GR06} C. Guti\'{e}rrez and R. Rabanalb. \newblock {Injectivity of differentiable maps $\mathbb{R}^2 \to \mathbb{R}^2$ at infinity.} 
\newblock{\it Bull. Braz. Math. Soc., New Series }, {\bf 37(2)},  217-239, 2006.

\bibitem[17]{GS03} C. Guti\'{e}rrez and A. Sarmiento. \newblock {Injectivity of $C^1$ maps $\mathbb{R}^2 \to \mathbb{R}^2$ at infinity.} 
\newblock{\it Asterisque}, {\bf 287},  89-102, 2003.

\bibitem[18]{LX19}
W.~Liu and Q.~Xu,
\newblock A minimax principle to the injectivity of the Jacobian conjecture.
\newblock {\it arXiv.1902.03615}, 10 Feb 2019.

\bibitem[19]{LZ19}
W.~Liu,
\newblock A minimax method to the Jacobian conjecture.
\newblock {\it Preprint}.


\bibitem[20]{MJ13}
C. Maquera and  V.S.Jean,
\newblock {Foliations and global injectivity in $\mathbb{R}^n$.}
\newblock {\it Bull. Braz. Math. Soc.  New Series}, {\bf 44(2)}, 273-284, 2013.

\bibitem[21]{MY60}
L. Markus and  H. Yamabe,
\newblock {Global stability criteria for differential system.}
\newblock {\it Osaka Math. Journal}, {\bf 12}, 305-317, 1960.

\bibitem[22]{PS94}S.Pinchuk. \newblock{A counterexamle to the strong real Jacobian conjecture.} \newblock
{\it Math. Z.} {\bf 217}, 1-4, 1994.

\bibitem[23]{RR05} R. Rabanal. \newblock{An Eigenvalue Condition for the Injectivity and Asymptotic Stability at Infinity.} \newblock{\it Qual. Theory Dyn. Syst.}, {\bf 6}, 233-250, 2005. 

\bibitem[24]{RR10} Rabanal. \newblock{On differentiable area-preserving maps of the plane.} \newblock{\it Bull. Braz. Math. Soc., New Series}. {\bf 41}, 73-82, 2010.  

\bibitem[25]{RP97}Rabier. \newblock{Ehresmann fibrations and Palais-Smale conditions for morphisms of Finsler manifolds.} \newblock{\it Ann. Math.},  {\it 146(3)},  647-691, (1997). 

\bibitem[26]{RP02} P. Rabier. \newblock{On the Malgrange condition for complex polynomials of two variables.} \newblock{\it Manuscr. Math.}, {\bf 109}, 493-509, 2002.  

\bibitem[27]{SX96} B. Smyth, F. Xavier. \newblock{Injectivity of Local Diffeomorphisms from Nearly Spectral Conditions.} \newblock{\it J. Diff. Equat.}, {\bf 130}, 406-414, 1996.  

\end{thebibliography}
\end{CJK*}
 \end{document}